\documentclass[reqno]{amsart}
\usepackage{amssymb,amsmath,amsfonts,amsthm,amsaddr} 
\usepackage{enumitem}    
\usepackage{verbatim}
\usepackage{hyperref}    
\usepackage{graphicx}  
\hypersetup{colorlinks=true,linkcolor=green,pdfborderstyle={/S/U/W 1}}


\newcommand{\floor}[1]{{\lfloor #1 \rfloor}}
\newcommand{\ceil}[1]{{\lceil #1 \rceil}}

\theoremstyle{plain}
\newtheorem{theorem}  {Theorem}  [section]
\newtheorem{lemma}  [theorem]   {Lemma}
\newtheorem{corollary}[theorem] {Corollary}
\newtheorem{fact} [theorem] {Fact}
\newtheorem{proposition} [theorem] {Proposition}
\newtheorem{claim} [theorem] {Claim}

\theoremstyle{definition}

\newtheorem{definition}[theorem] {Definition}
\newtheorem{example}[theorem] {Example}

\newcommand{\claimproof}{\renewcommand{\qedsymbol}{$\diamond$}}

\usepackage[numbers,sort&compress]{natbib}

\DeclareMathOperator{\Ext}{Ext}
\DeclareMathOperator{\Hom}{Hom}
\DeclareMathOperator{\bCol}{{Col}}
\DeclareMathOperator{\Col}{{Col}}
\DeclareMathOperator{\bHom}{{Hom}}
\DeclareMathOperator{\Mix}{Mix}

\DeclareMathOperator{\SP}{SP}
\DeclareMathOperator{\SPR}{SPR}
\DeclareMathOperator{\CSP}{CSP}
\DeclareMathOperator{\PSC}{PSC}
\DeclareMathOperator{\NU}{NU}

\DeclareMathOperator{\Poly}{P}
\DeclareMathOperator{\PSPACE}{PSPACE}
\DeclareMathOperator{\Recon}{Recon}
\newcommand{\kNU}[1][k]{#1-$\NU$}
\DeclareMathOperator{\id}{id}
\DeclareMathOperator{\odd}{odd}
\newcommand{\ME}{\Mix_{\Ext}} 
\newcommand{\MH}{\Mix_{\Hom}} 
\DeclareMathOperator{\RE}{\Recon_{\Ext}}
\DeclareMathOperator{\RH}{\Recon_{\Hom}}
\newcommand{\RSP}{\SPR} 

 \usepackage{tikz} 
 \usepackage{twoopt}
 \tikzset{
  vert/.style={circle, draw=black!100,fill=black!100,thick, inner sep=0pt, minimum size=2mm}, 
  empty/.style={draw=none, fill=none, minimum size=0mm, inner sep=0pt},
 }
 \newcommand{\Vlabel}[4][.3cm]{\draw node[empty, #3 of = #2, node distance = #1] () {$#4$};}
 
\begin{document}
\thispagestyle{empty}  
\title[A Brightwell-Winkler type characterisation of NU graphs]{A Brightwell-Winkler type characterisation of ${\rm NU}$ graphs}
\keywords{Graph homomorphism extension; Recolouring; Reconfiguration; Shortest path reconfiguration; Near-unanimity polymorphism; Dismantlability}              
\subjclass[2020]{05C15; 05C60; 05C85} 

\author{Mark Siggers}
\address{Kyungpook National University, Daegu, Republic of Korea.}
\email{mhsiggers@knu.ac.kr}
\thanks{The author was supported by Korean NRF Basic Science Research Program (NRF-2022R1A2C1091566) funded by the Korean government (MEST)
          and the Kyungpook National University Research Fund.}

\keywords{Graph homomorphism reconfiguration; recolouring; shortest path reconfiguration; dismantlable graph; near-unanimity graph}              
\subjclass[2020]{05C15, 05C38, 05C75, 05C85}

\begin{abstract}
In 2000,  Brightwell and Winkler characterised dismantlable graphs as the graphs $H$ for which the Hom-graph ${\rm Hom}(G,H)$, defined on the set of homomorphisms from $G$ to $H$, is connected for all graphs $G$.  This shows that the reconfiguration version ${\rm Recon_{Hom}}(H)$ of the $H$-colouring problem, in which one must decide for a given $G$ whether ${\rm Hom}(G,H)$ is connected, is trivial if and only if $H$ is dismantlable.      
  
 We prove a similar starting point for the reconfiguration version of the $H$-extension problem.   Where ${\rm Hom}(G,H;p)$ is the subgraph of the Hom-graph ${\rm Hom}(G,H)$ induced by the $H$-colourings extending the $H$-precolouring $p$ of $G$, the reconfiguration version ${\rm Recon_{Ext}(H)}$ of the $H$-extension problem asks, for a given  $H$-precolouring $p$ of a graph $G$, if ${\rm Hom}(G,H;p)$ is connected.   We show that the graphs $H$ for which ${\rm Hom}(G,H;p)$ is connected for every choice of $(G,p)$ are exactly the ${\rm NU}$ graphs.  This gives a new characterisation of ${\rm NU}$ graphs, a nice class of graphs that is important in the algebraic approach to the ${\rm CSP}$-dichotomy.  
  
We further  give bounds on the diameter of ${\rm Hom}(G,H;p)$ for ${\rm NU}$ graphs $H$, and show that shortest path between two vertices of ${\rm Hom}(G,H;p)$ can be found in parameterised polynomial time. We apply our results to the problem of shortest path reconfiguration, significantly extending recent results. 
  
 \end{abstract}

\maketitle%


\section{Introduction}

  In this paper,  graphs may have loops but no multiple edges.  Graphs without loops are {\em irreflexive} and graphs in which every vertex has a loop are {\em reflexive}. A path in a graph $H$ between vertices $u$ and $v$ is called a $uv$-path. 

  Recall that a homomorphism $\phi: G \to H$ from a graph $G$ to a graph $H$, is a vertex map $\phi: V(G) \to V(H)$ that preserves edges:
      \[ u \sim v \Rightarrow \phi(u) \sim \phi(v). \]
The $\Hom$-graph $\Hom(G,H)$ is the graph on the set of $H$-colourings of $G$ in which two such colourings $\phi$ and $\psi$ are adjacent if  they satisfy the following for all pairs of vertices $u$ and $v$  of $G$:
  \[ u \sim v \Rightarrow \phi(u) \sim \phi'(v). \]

While the $H$-colouring problem $\Hom(H)$ asks for an instance graph $G$ if the graph $\Hom(G,H)$ is non-empty, the mixing version $\MH(H)$, called $H$-mixing, asks for an instance $G$ if $\Hom(G,H)$ is connected, and the  reconfiguration version $\RH(H)$, known as $H$-recolouring, asks for an instance $(G,\phi,\psi)$ if there is a path from $\phi$ to $\psi$ in $\Hom(G,H)$.  

 We say that a problem is tractable, or in $\Poly$, if it can be solved in polynomial time, and say that is in $\PSC$ if it is $\PSPACE$-complete.

There have been several papers in the last 10-15 years determining the computational complexity of $\RH(H)$ and $\MH(H)$ for various graphs $H$. Many of these papers support the assertion that the complexity of the problems is closely related to topological properties of the graph $H$, or rather, of various simplicial complexes derived from $H$.  For example, it is shown in \cite{Wroch20} that $\RH(H)$ is tractable for an irreflexive graph $H$ if its neighbourhood complex is purely $1$-dimensional. It is shown in \cite{LNS20} that $\RH(H)$ is in $\PSC$ for reflexive $H$ if it clique complex is a $2$-sphere.  It is shown in \cite{KLS} that $\MH(H)$ is coNP-complete for reflexive $H$ if its clique complex purely $1$-dimension and has non-trivial first homology.

All of these results are preceded by a result of Brightwell and Winkler from \cite{BW00}. We define a graph $H$ to be {\em $\Hom$-trivial} if $\Hom(G,H)$ is connected for all $G$. We use the word `trivial' as in this case all instances of $\RH(H)$ and $\MH(H)$ are YES-instances, so both problems are trivial.    
 Dismantlable graphs are topologically trivial in that their clique complex deformation retracts to a point.  In \cite{BW00}, a paper well known for its characterisation of cop-win graphs as dismantlable graphs, Brightwell and Winkler gave several characterisations of dismantlable graphs. The following is the characterisation we are most interested in. 

\begin{theorem}[\cite{BW00}]\label{thm:BW00}
  A graph is dismantlable if and only if it is $\Hom$-trivial. 
\end{theorem}
  
 An {\em $H$-precolouring $p$ of $G$} is a map $p: S \to V(H)$ for some subset $S \subset V(G)$ of its vertices.  A homomorphism in $\bHom(G,H)$ {\em extends} $p$ if it restricts to $p$ on $S$. We usually make the support $S$ of $p$ implicit, and write $|p|$ for $|S|$.  

\begin{definition}
    For an $H$-precolouring $p$ of $G$, let $\bHom(G,H;p)$ be the subgraph of $\bHom(G,H)$ induced by the vertices that extend $p$. 
 \end{definition}
 
   The well known homomorphism extension problem $\Ext(H)$ ask for an instance $(G,p)$, where $p$ is an $H$-precolouring of $G$, if $\bHom(G,H;p)$ has a vertex. We make the following definitions. 

\begin{definition}
 The {\em $H$-extension reconfiguration problem} $\RE(H)$ asks for an instance $(G,p,\phi,\psi)$ with $\phi$ and $\psi$ in $\bHom(G,H;p)$ if $\phi$ and $\psi$ are in the same component of $\bHom(G,H;p)$. 
A graph $H$ is {\em $\Ext$-trivial} if $\bHom(G,H;p)$ is connected for every pair $(G,p)$ where $p$ is an $H$-precolouring of a graph $G$.
\end{definition}

Near-unanimity graphs, or $\NU$ graphs, are an important class of graphs in the algebraic theory of $\CSP$, or constraint satisfaction problems.  We will define them in the next section and discuss many of their nice properties. 

In the setting of reflexive graphs, $\NU$ graphs are a significant subclass of dismantlable graphs, and along the lines of \cite{BW00}, Larose, Loten, and Z\' adori gave several characterisations of them in \cite{LLZ05}.  From these characterisations
the following, which gives an  analogue of Theorem \ref{thm:BW00} for $\NU$ graphs,  is nearly immediate. 
\begin{corollary}[\cite{LLZ05}]\label{cor:LLZref}
  For a reflexive graph $H$ the following are equivalent. 
     \begin{enumerate}
       \item $H$ is an  $\NU$ graph. 
       \item $H^2$ dismantles to its diagonal. 
       \item $H$ is $\Ext$-trivial.
      \end{enumerate}   
    \end{corollary}
The definitions required for item $(2)$ are also put off  until the next section.  We show now without definitions how Corollary \ref{cor:LLZref} is indeed immediate from \cite{LLZ05} by standard arguments. The equivalence of $(1)$ and $(2)$ is explicit in \cite{LLZ05} as Theorem 1.1.  As the more general  Lemmas \ref{lem:Dis1} and \ref{lem:Dis2} give an alternate proof of the equivalence of $(3)$ with the above two characterisations, we only sketch now how this follows from Theorem 3.1 of \cite{LLZ05}.  In Theorem 3.1 the authors show that if $H$ is $\NU$, then all its $k$-subalgebras are connected. As $\Hom(G,H;p)$ is a $|V(G)|$-subalgebra of $H$, this shows that $H$ is $\Ext$-trivial. On the other hand, if $H$ is $\Ext$-trivial, then in particular $\Hom(H^2, H; p_\Delta)$ where $p_\Delta$ is the identity on the diagonal copy of $H$ in $H^2$, is connected.  From this it is immediate that there is a path of idempotents between the projections in $\Hom(H^2,H)$. This is another characterisation of reflexive $\NU$-graphs from Theorem 3.1 of \cite{LLZ05}.  

Our first goal is to generalise Corollary \ref{cor:LLZref} from reflexive graphs to all graphs. There are a couple of relevant papers that we should mention.  In \cite{LLT07} the authors generalised \cite{LLZ05} to more general relational structures. Their focus was characterisations of {\em first order definable} structures, rather than $\NU$ structures.  While these characterisations which coincide for reflexive graphs diverge somewhat for general structures, the paper still provides results that get us most of the way there.  In \cite{BBDL21}, the authors defined dismantlability for more general structures, and extended \cite{BW00} to this definition. They related a structure being dismantlable to the triviality of an extension/mixing type problem. The problem is similar to our problem $\ME(H)$, but is not the same.   

To extend Corollary \ref{cor:LLZref} to all graphs, we need to alter several of our definitions for bipartite graphs.  Indeed there are bipartite $\NU$ graphs, but there cannot be bipartite $\Ext$-trivial graphs, as for bipartite $H$, $\Hom(G,H)$ always has an even number of components.  

Calling one partite set of a bipartite graph the {\em black} vertices, and one the {\em white} vertices, we let $\Hom_B(G,H)$ be the subgraph of $\Hom(G,H)$ induced by those vertices that map black vertices of $G$ to black vertices of $H$, and let $\Hom_B(G,H;p) = \Hom(G,H;p) \cap \Hom_B(G,H)$. 

A bipartite graph $H$ is 
\begin{itemize}
  \item {\em bipartite dismantlable} if it dismantles to an edge,
  \item {\em bipartite $\Hom$-trivial} if $\Hom_B(G,H)$ is connected for all $G$, and
  \item {\em bipartite $\Ext$-trivial} if $\Hom_B(G,H;p)$ is connected for all $(G,p)$.\end{itemize}

From \cite{BW00} we get a bipartite version of Theorem \ref{thm:BW00}.

\begin{theorem}\label{thm:BW00B}
  A graph is bipartite dismantlable if and only if it is bipartite $\Hom$-trivial. 
\end{theorem}

In Section \ref{sect:ME} we prove the following.

\begin{theorem}\label{thm:mainfull}
  A  graph is an $\NU$ graph if and only if it is $\Ext$-trivial or bipartite $\Ext$-trivial. 
\end{theorem}  

 We make explicit mention here that a bipartite graph is by definition irreflexive, and that when we are talking of $\NU$ graphs, `irreflexive' and `bipartite' coincide. 

Our motivation for making Corollary \ref{cor:LLZref} explicit, and extending it to Theorem~\ref{thm:mainfull}, is that we will apply it to the problem of Shortest Path Reconfiguration.
The Shortest Path Reconfiguration problem $\SPR$ asks, for an instance $(G,u,v,P_\phi,P_\psi)$, where $u$ and $v$ are vertices of the graph $G$ and $P_\phi$ and $P_\psi$ are shortest $uv$-paths in $G$, if one can get from $P_\phi$ to $P_\psi$ by a sequence of shortest $uv$-paths in $G$ by changing one vertex at a time.

 In \cite{Bonsma13} it was shown that $\SPR$ is in $\PSC$, and in \cite{Bonsma13} and \cite{GKL} it is shown the problem drops to $\Poly$ when the instance $G$ is restricted to various classes of graphs.  In particular, where again a graph $G$ is {\em $\SP$-trivial} if every instance $(G,u,v,P,Q)$ of $\SPR$ is a YES-instance, it is shown in \cite{Bonsma13} that chordal graphs are $\SP$-trivial, and in \cite{GKL} that grids and bridged graphs are $\SP$-trivial. 

To relate this back to $\RE(G)$, we note that for $\RH(H)$, one often replaces the graph $\Hom(G,H)$ with the subgraph $\Col(G,H)$ on the same vertices but having only those edges of $\Hom(G,H)$ on which the endpoint maps differ on a single vertex.  It is well known and easy to show that when $G$ is a graph (as opposed to a directed graph) there is a path between vertices of $\Hom(G,H)$ if and only if there is a path between them in $\Col(G,H)$. So $\Hom(G,H)$ and $\Col(G,H)$ have the same components and can both be used as the reconfiguration graph for $\RH(H)$.  Similarly the components of $\Hom(G,H;p)$ and of the subgraph $\Col(G,H;p) = \Col(G,H) \cap \Hom(G,H;p)$ are the same.

With this, one sees that an instance $(G,u,v,P_\phi,P_\psi)$ of $\SPR$ is just an instance $(P, p, \phi, \psi)$ of $\RE(G)$ where $P$ is a path of length $d(u,v)$, $p$ is the $G$-precolouring that fixes its endpoints to $u$ and $v$, and $\phi$ and $\psi$ are maps of $P$ onto $P_\phi$ and $P_\psi$ respectively. The {\em shortest path reconfiguration graph} $\SP(G,u,v)$ is $\Col(P,G;p)$. 

So we get from Theorem \ref{thm:mainfull} that $G$ is $\SP$-trivial if it is an $\NU$ graph. Indeed, as the existence of loops on $G$ has no effect on $\SPR$, we get that the following.

 \begin{corollary}\label{cor:SPR}
     If $G$ is an $\NU$ graph then $G$, with loops removed, is $\SP$-trivial.
\end{corollary}

 Chordal graphs, and most bridged graphs, are reflexive $\NU$ graphs;  grids  are irreflexive $\NU$ graphs.   So while our result does not recover all the results of these two papers about $\SP$-trivial graphs, it recovers most of them, and because it is done not only for reflexive or irreflexive graphs, greatly extends the set of known $\SP$-trivial graphs. 

In \cite{KMM11} it was shown that components of the reconfiguration graph $\SP(G,u,v)$ could have diameter exponential in $d = d(u,v)$.  In \cite{Bonsma13} and \cite{GKL}, when a graph $G$ was shown to be $\SP$-trivial, the diameter of the reconfiguration graph $\Col(G,u,v)$ was shown to have much smaller diameter. To get similar bounds on this diameter when we apply Theorem \ref{thm:mainfull} we bound the diameter of $\Hom(G,H;p)$ for $\NU$ graphs $H$.  This is done in Corollary \ref{cor:diam2} and then in  Corollary \ref{cor:polynomial} it is used to show that there is a parameterised polynomial time algorithm for finding the shortest path in $\Hom(G,H;p)$ between two vertices $\phi$ and $\psi$.  

The bound on the diameter of $\Hom(G,H;p)$ gives a rough bound on the diameter of $\Col(G,H;p)$.  In Section \ref{sect:ReconLen} we sharpen the bound on the diameter of $\Col(G,H;p)$ in the case that $H$ is a $3$-$\NU$ graph, and give an infinite family of examples for which this bound is sharp.  In doing so, we introduce the notion of an `efficient dismantling' of a dismantlable graph, and show that $3$-$\NU$ graphs have efficient dismantlings.  This may be of independent interest. 

In Section \ref{sect:SPR} we apply Theorem \ref{thm:mainfull}, and our bounds on the diameter of $\Col(G,H;p)$ to the shortest path reconfiguration problem. These are not quite as good as the bounds for chordal graphs and grids from \cite{Bonsma13} and \cite{GKL}, but are much more general.

\section{Background results and definitions}\label{sect:bg}

The {\em categorical product} $H_1 \times H_2$ of two graphs $H_1$ and $H_2$ is the graph with vertex set $V(H_1) \times V(H_2)$ in which $(v_1,v_2) \sim (v_1',v_2')$ if $v_i \sim v_i'$ for each $i$.  It is well known and easy to check that the projections $\pi_i: H_1 \times H_2 \to H_i: (v_1,v_2)\mapsto v_i$ are homomorphisms. The product is clearly commutative and associative, so one can write $H^2$ for $H \times H$ and $H^n$ for $H \times H^{n-1}$.

   A homomorphism $r: H \to R$ of $H$ to a subgraph $R$ is a {\em retraction} if it restricts to the identity map $\id$ on $V(R)$.  If such $r$ exists, then $r(H) = R$ is a {\em retract} of $H$.   
  The following is well known, showing that the class of \kNU{} graphs is what is called a graph variety. See, for example, \cite{NU2}.
  
  \begin{fact}\label{fact:NUvar}
    The class of \kNU{} graphs is closed under products and retractions. 
  \end{fact}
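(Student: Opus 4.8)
The plan is to treat the two closure properties separately, in each case building the required \kNU{} polymorphism explicitly from the given data and then checking the two defining conditions: that it is a homomorphism, and that it satisfies the near-unanimity identity.

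For closure under products, suppose $H_1$ and $H_2$ are \kNU{} graphs, witnessed by polymorphisms $f_1 \colon H_1^k \to H_1$ and $f_2 \colon H_2^k \to H_2$. The natural candidate for a polymorphism of $H_1 \times H_2$ is the coordinatewise map
\[ f\big((a_1,b_1), \dots, (a_k,b_k)\big) = \big(f_1(a_1, \dots, a_k),\, f_2(b_1, \dots, b_k)\big). \]
To see that $f$ is a homomorphism I would pass through the canonical isomorphism $(H_1 \times H_2)^k \cong H_1^k \times H_2^k$ that reorders coordinates, sending a $k$-tuple of pairs to the pair of $k$-tuples. Under this isomorphism $f$ becomes $f_1 \times f_2$, which is a homomorphism because $f_1$ and $f_2$ are and the categorical product of two homomorphisms is a homomorphism. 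The near-unanimity identity is then immediate: if at least $k-1$ of the pairs $(a_i,b_i)$ equal a fixed pair $(a,b)$, then at least $k-1$ of the $a_i$ equal $a$ and at least $k-1$ of the $b_i$ equal $b$, so $f_1$ returns $a$ and $f_2$ returns $b$, whence $f$ returns $(a,b)$.

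For closure under retractions, suppose $f \colon H^k \to H$ is a \kNU{} polymorphism and $r \colon H \to H'$ is a retraction onto a subgraph $H'$, so that $r$ restricts to $\id$ on $V(H')$. Since $H'$ is a subgraph of $H$, the power $(H')^k$ is a subgraph of $H^k$, and I would define
\[ g(x_1, \dots, x_k) = r\big(f(x_1, \dots, x_k)\big) \qquad \text{for } x_1, \dots, x_k \in V(H'). \]
This is a homomorphism $(H')^k \to H'$ because it is the composite of the inclusion $(H')^k \hookrightarrow H^k$, the polymorphism $f$, and the retraction $r$, each of which is a homomorphism. For near-unanimity, if at least $k-1$ of the $x_i$ equal some $v \in V(H')$, then $f(x_1, \dots, x_k) = v$ by the near-unanimity of $f$, and since $v \in V(H')$ we obtain $g(x_1, \dots, x_k) = r(v) = v$.

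Neither direction presents a genuine obstacle. The only point demanding a little care is the bookkeeping around the reordering isomorphism $(H_1 \times H_2)^k \cong H_1^k \times H_2^k$ in the product case: one must confirm that applying $f_1$ and $f_2$ independently to the two coordinate-families really does respect adjacency in the full power, which is exactly what that isomorphism encodes (and which can alternatively be verified by unwinding the definition of adjacency in $(H_1 \times H_2)^k$ directly).
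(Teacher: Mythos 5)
Your proof is correct: both constructions (the coordinatewise combination $f_1\times f_2$ for products, and $r\circ f$ restricted to $(H')^k$ for retractions) are the standard witnesses, and your verification of the homomorphism property and the near-unanimity identity in each case is sound. The paper does not prove this fact at all --- it records it as well known and cites \cite{NU2} --- so your direct verification supplies exactly the routine argument the paper is deferring to the literature.
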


  \subsection{Alternate definition of the Hom-graph}

  The $\Hom$-graph $\bHom(G,H)$ defined in the introduction has an alternate and equivalent definition that is useful for topological arguments. Let $I_\ell$ be the reflexive path $0 \sim 1 \sim \dots, \sim \ell$ of length $\ell$. For a homomorphism $h:I_\ell \times G \to H$ and $i \in V(I_\ell)$ the {\em $i^{th}$ step of $h$} is the homomorphism $h_i:G \to H$ that we get by restricting $h$ to the copy $i \times G$ of $G$ in $I_\ell \times G$ that is induced on the vertex set $\{(i,g) \mid g \in V(G)\}$. 
  
  \begin{definition}
  For $H$ and $(G,p)$, the $\Hom$-graph $\Hom(G,H;p)$ is the graph on the $H$-colourings of $G$ extending $p$ in which two extensions $\phi$ and $\psi$ are adjacent if there is a homomorphism $h: I_1 \times G \to H$ such that  $h_0 = \phi$ and $h_1 = \psi$.  
  \end{definition}

  A walk from $\phi$ to $\psi$ in $\bHom(G,H;p)$ corresponds exactly to a homomorphism $h: I_\ell \times G \to H$ such that $h_0 = \phi$ and $h_\ell = \psi$ and each $h_i$ extends $p$.   With this and the simple observation that for homomorphisms $\phi, \phi': G \to H$ the map 
  \[ \phi \times \phi': G \to H^2: (a,b) \mapsto (\phi(a),\phi(b)) \] 
 is also homomorphism, it is easy to verify such  facts as the following. 
  
\begin{fact}\label{fact:times}
  For walks $h,h': I_\ell \times G \to H$ in $\bHom(G,H;p)$
  the following is a walk in  $\bHom(G,H^2;p \times p)$: 
   \[ h \times h' := (h_0 \times h'_0) \sim (h_1 \times h'_1) \sim  \dots \sim  (h_\ell \times h'_\ell). \] 
 \end{fact}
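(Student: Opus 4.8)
The plan is to reduce the statement to the universal property of the categorical product, read through the walk-correspondence given by the alternate definition of the $\Hom$-graph. Recall that a walk of length $\ell$ in $\bHom(G,H;p)$ is exactly a homomorphism $I_\ell \times G \to H$ each of whose steps extends $p$; so to exhibit $h \times h'$ as a walk in $\bHom(G,H^2;p)$ it suffices to produce a single homomorphism $I_\ell \times G \to H^2$ whose $i$-th step is $h_i \times h'_i$ and whose steps all extend $p$ (read on $H^2$ as the diagonal partial colouring $v \mapsto (p(v),p(v))$, the only sensible meaning of $p$ there).

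First I would take $h \times h': I_\ell \times G \to H^2$ to be the map $(i,g) \mapsto (h(i,g), h'(i,g))$. That this is a homomorphism is immediate from the definition of the categorical product: if $(i,g) \sim (i',g')$ in $I_\ell \times G$, then $h(i,g) \sim h(i',g')$ and $h'(i,g) \sim h'(i',g')$ in $H$ because $h$ and $h'$ are homomorphisms, and by definition of $H^2$ this is precisely the adjacency $(h(i,g),h'(i,g)) \sim (h(i',g'),h'(i',g'))$. Restricting to the copy $i \times G$ shows that the $i$-th step of this map is $v \mapsto (h_i(v), h'_i(v))$, which is exactly $h_i \times h'_i$ in the notation fixed just above.

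Next I would verify that each step extends $p$. For any $v$ in the domain of $p$ we have $h_i(v) = p(v) = h'_i(v)$, since both $h$ and $h'$ are walks in $\bHom(G,H;p)$; hence $(h_i \times h'_i)(v) = (p(v), p(v))$, so $h_i \times h'_i$ extends the diagonal partial colouring. Combined with the homomorphism constructed in the previous step, the alternate definition then yields that $(h_0 \times h'_0) \sim (h_1 \times h'_1) \sim \dots \sim (h_\ell \times h'_\ell)$ is a walk in $\bHom(G,H^2;p)$.

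There is essentially no hard step here. The whole content is the universal property of the product, together with the observation that the consecutive adjacencies $h_i \times h'_i \sim h_{i+1} \times h'_{i+1}$ come for free because they are all witnessed by the one homomorphism $I_\ell \times G \to H^2$. The single point I would flag is the notational abuse in $\bHom(G,H^2;p)$: there $p$ must be interpreted as the diagonal partial $H^2$-colouring, and the computation above is exactly what confirms that the product walk extends it.
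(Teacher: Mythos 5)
Your proof is correct and is essentially the argument the paper intends: the paper states this as an immediate consequence of the observation that $\phi\times\phi'\colon G\to H^2$ is a homomorphism, and your packaging of the two walks into a single homomorphism $I_\ell\times G\to H^2$ via the universal property of the categorical product, together with the check that each step lands on the diagonal partial colouring $v\mapsto(p(v),p(v))$, is exactly the intended justification. Your flag about the notational abuse in $\bHom(G,H^2;p)$ is also the right reading of the paper's convention.
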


   \subsection{Dismantling, the bipartite resolution, and symmetric shadows}
   
   Recall that a vertex $v'$ {\em dominates} a vertex $v$ in a graph $H$ if $N(v) \subset N(v')$. Note that if $v$ is reflexive, then $v \in N(v)$. 
   If $v'$ dominates $v$ then there is a retraction $d: H \to H \setminus \{v\}$ taking $v$ to $v'$. This is a {\em dismantling retraction}; we say it {\em dismantles $v$}. A graph $H$ dismantles to $R$ if there is a sequence of dismantling retractions $d_1, d_2 \dots, d_\ell$, called a {\em dismantling}, whose composition $\bar{d_\ell}:= d_\ell \circ d_{d\ell-1} \circ \dots d_1$ is a retraction of $H$ to $R$. 
   
   The following is clear and proved in more generality as Lemma 5.1 of \cite{LLT07}. It allows one to greedily decide if a graph dismantles to a given subgraph.  
   
   \begin{fact}\label{fact:dismantreorder}
     If $d_1, d_2, \dots, d_\ell$ is a dismantling of $H$ to $R$, such that $d_i$ dismantles the vertex $v_i$,  and $d'_i$ is a dismantling retraction of $H$ to $H \setminus \{v_i\}$, then \[ d_i',d_i'\circ d_1, \dots, d'_i \circ d_{i-1}, d_{i+1}, \dots, d_\ell \]  is also a dismantling of $H$ to $R$. 
   \end{fact}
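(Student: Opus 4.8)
The plan is to read the statement as an exchange (reordering) lemma and to reduce it to moving the dismantling of $v_i$ to the front. Write $H_0 = H, H_1, \dots, H_\ell = H'$ with $H_j = H_{j-1}\setminus\{v_j\}$, so that $v_j$ is dominated in $H_{j-1}$ by some vertex $w_j$; note $\{v_1,\dots,v_\ell\} = V(H)\setminus V(H')$. Let $d'$ be the dismantling retraction of $H$ that removes $v_i$ (for this $v_i$ must be dominated in $H$, which is the operative hypothesis and is exactly what lets $d'$ exist). Prepending $d'$, it then suffices to prove the following exchange lemma: if $v$ is dominated in $H$, $v\notin V(H')$, and $H$ dismantles to $H'$ in $\ell$ steps, then $H\setminus\{v\}$ dismantles to $H'$, reusing the remaining retractions in their original relative order. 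Applying this with $v=v_i$ and prepending $d'$ yields precisely $d', d_1,\dots,d_{i-1},d_{i+1},\dots,d_\ell$.

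Everything runs on two elementary properties of domination, which I would record first. Deletion-preservation: if $w$ dominates $u$ in a graph $F$ and $x\neq w$, then $N_{F\setminus\{x\}}(u)=N_F(u)\setminus\{x\}\subseteq N_F(w)\setminus\{x\}=N_{F\setminus\{x\}}(w)$, so $w$ still dominates $u$ in $F\setminus\{x\}$. Transitivity: $N(c)\subseteq N(b)$ and $N(b)\subseteq N(a)$ give $N(c)\subseteq N(a)$. I then prove the exchange lemma by induction on $\ell$, inspecting the first retraction $d_1$ (removing $v_1$, with dominator $w_1$). If $v=v_1$ there is nothing to do, since $H\setminus\{v\}=H_1$ already dismantles to $H'$ via $d_2,\dots,d_\ell$. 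Otherwise I remove $v_1$ from $H\setminus\{v\}$ first and recurse: when $w_1\neq v$, preservation keeps $w_1$ as a dominator of $v_1$ after deleting $v$; and to recurse on $H_1$ I need $v$ dominated in $H_1$, which again follows from preservation when $v$'s dominator $z$ differs from $v_1$, and from transitivity (through $w_1$) when $z=v_1$.

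The one genuine obstacle is the case in which the vertex $v$ I am pulling forward is itself the witness for another removal, i.e. $w_1=v$: deleting $v$ first destroys the dominator of $v_1$. Here I would use transitivity: $v$ is dominated in $H$ by some $z\neq v$ and $v$ dominates $v_1$, so $z$ dominates $v_1$ in $H$, and provided $z\neq v_1$ preservation makes $z$ a replacement dominator of $v_1$ in $H\setminus\{v\}$. The residual possibility $z=v_1$ means $v$ and $v_1$ dominate each other, $N(v)=N(v_1)$; then the transposition swapping $v$ and $v_1$ and fixing all other vertices is an automorphism of $H$ that fixes $V(H')$ pointwise, so it carries the given dismantling of $H_1=H\setminus\{v_1\}$ onto $H'$ to a dismantling of $H\setminus\{v\}$ onto $H'$. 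Once each case yields a valid first removal together with a length-$(\ell-1)$ dismantling of the residual graph, the induction closes, and composing with $d'$ proves the statement; the only price paid in the mutual-domination case is a harmless relabelling within the tail.
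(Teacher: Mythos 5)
Your argument is correct, but note that it supplies a proof where the paper gives none: the Fact is declared ``clear'' and attributed to Lemma 5.1 of \cite{LLT07}, which establishes the confluence property of dismantling for general relational structures. Your exchange lemma (if $v$ is dominated in $H$ and $v\notin V(H')$, then $H\setminus\{v\}$ dismantles to $H'$) is essentially that cited lemma, and your induction, resting on deletion-preservation and transitivity of domination, is a sound self-contained proof of it. Two remarks. First, you are right to treat ``$v_i$ is dominated in $H$ itself'' as the operative hypothesis: the statement never defines $d'$, and without this hypothesis it is false --- for the reflexive path on vertices $1,2,3,4$ dismantled to $\{4\}$ by removing $1,2,3$ in order, the vertex $2$ is dominated in $H\setminus\{1\}$ but not in $H$, so no $d'$ exists for $i=2$. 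Your reading is the one under which the Fact is true and the one the paper actually uses. Second, your claim that the remaining retractions are reused ``in their original relative order'' needs exactly the qualification you give: when the pulled-forward vertex is the dominating witness $w_j$ for an earlier removal, $d_j$ must be redirected to a new dominator (your $z$, or $w_1$ via transitivity), and in the mutual-domination case $N(v)=N(v_1)$ the removed vertices themselves are relabelled by the transposition automorphism; so the new sequence removes the same vertex set in the permuted order but is not literally the tuple $(d_1,\dots,d_{i-1},d_{i+1},\dots,d_\ell)$. Since the paper only ever invokes the Fact to rearrange which vertex is dismantled first or last, this discrepancy is harmless, and your case analysis ($w_1=v$ versus $w_1\neq v$, $z=v_1$ versus $z\neq v_1$) isolates precisely the configurations where a naive reordering would break.
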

    
   For a graph $H$, the {\em bipartite resolution of $H$} is the bipartite (so irreflexive) graph $B(H):= K_2 \times H$.  As $K_2$ has \kNU{} polymorphisms for all $k \geq 3$, the following is immediate from Fact \ref{fact:NUvar}.
   
   \begin{fact}\label{fact:BNU}
    If $H$ is a \kNU{} graph then so is $B(H)$. 
   \end{fact}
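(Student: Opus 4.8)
The plan is to read this off directly from Fact \ref{fact:NUvar}, which asserts that the class of \kNU{} graphs is closed under products. Since $B(H)$ is by definition the product $K_2 \times H$, it suffices to exhibit both factors as \kNU{} graphs. The factor $H$ is a \kNU{} graph by hypothesis, so the only point requiring attention is the factor $K_2$.

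To see that $K_2$ admits a \kNU{} polymorphism of each arity $k \geq 3$, I would first note that the categorical power $K_2^k$ is a perfect matching: a vertex $x \in \{0,1\}^k$ has its bitwise complement $\bar x$ as its unique neighbour, since adjacency in $K_2$ forces disagreement in every coordinate. Hence a map $\phi : K_2^k \to K_2$ is a homomorphism precisely when it is self-complementary, i.e.\ $\phi(\bar x) = 1 - \phi(x)$ for all $x$. I would then take $\phi$ to be the majority function, breaking ties on the inputs with exactly $k/2$ ones (relevant only for even $k$) by pairing each such $x$ with $\bar x$ and assigning them opposite values; this keeps $\phi$ self-complementary, hence a homomorphism. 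Whenever at least $k-1$ of the coordinates equal a common value $v$, the input has a strict majority equal to $v$ (because $k - 1 > k/2$ for $k \geq 3$), so $\phi$ returns $v$, which is exactly the \kNU{} condition. Thus $K_2$ is a \kNU{} graph.

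With both $K_2$ and $H$ identified as \kNU{} graphs, Fact \ref{fact:NUvar} immediately gives that their product $B(H) = K_2 \times H$ is a \kNU{} graph, completing the argument.

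There is essentially no obstacle here: once closure under products is available, the statement is a one-line consequence, and the only genuine verification — that $K_2$ carries a near-unanimity polymorphism of every arity — is the elementary self-complementary majority construction above (and is in any case asserted in the text immediately preceding the statement). The argument uses no structure of $H$ beyond its assumed polymorphism.
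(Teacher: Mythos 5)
Your proposal is correct and follows exactly the paper's route: the paper derives this statement immediately from Fact \ref{fact:NUvar} after noting that $K_2$ has \kNU{} polymorphisms of every arity. Your explicit verification that $K_2$ carries such polymorphisms (via the self-complementary majority function on the perfect matching $K_2^k$) is a sound filling-in of a detail the paper simply asserts.
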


   The following was proved in \cite{BFH93} in the case that $H$ is reflexive and $R$ is a single vertex subgraph, but the proof gives the following.
   We recall the proof as we will use notions from it later.    
    \begin{lemma}\cite{BFH93} \label{lem:disred}
       A graph $H$ dismantles to a subgraph $R$ if and only if 
       $B(H)$ dismantles to $B(R)$.
    \end{lemma}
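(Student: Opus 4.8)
The plan is to prove the two directions separately, with the forward direction a routine lift of single dismantling steps and the reverse direction requiring a pairing argument driven by the layer structure of $B(H) = K_2 \times H$. Throughout I write the vertices of $B(H)$ as pairs $(i,v)$ with $i \in \{0,1\}$ and $v \in V(H)$, and I rely on the computation $N_{B(H)}((i,v)) = \{1-i\} \times N_H(v)$: every neighbour of $(i,v)$ lies in the opposite layer. The same formula, suitably restricted, holds in any induced subgraph arising during a dismantling.

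For the forward direction, suppose $v'$ dominates $v$ in $H$, so $N_H(v) \subseteq N_H(v')$. The neighbourhood formula then gives that $(0,v')$ dominates $(0,v)$ and $(1,v')$ dominates $(1,v)$ in $B(H)$. I would dismantle $(0,v)$ first; removing it preserves the inclusion $N((1,v)) \subseteq N((1,v'))$, since the deleted vertex $(0,v)$ lies in $N((1,v))$ only when $v$ is looped, and in that case $v \sim v'$ as well, so $(0,v)$ is deleted from both sides at once. Hence $(1,v)$ can then be dismantled onto $(1,v')$, and a single step $H \to H \setminus \{v\}$ lifts to two steps $B(H) \to B(H) \setminus \{(0,v),(1,v)\} = B(H \setminus \{v\})$. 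Induction on the length of a dismantling of $H$ to $H'$ produces a dismantling of $B(H)$ to $B(H')$.

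For the reverse direction I would induct on $|V(H) \setminus V(H')|$, the base case $H = H'$ being vacuous. Given a dismantling of $B(H)$ to $B(H')$, examine its first step, which removes some $(i,v)$ dominated by some $(j,v'')$ in the full graph $B(H)$. Since all neighbours of $(i,v)$ sit in layer $1-i$ and all neighbours of $(j,v'')$ sit in layer $1-j$, the inclusion $N((i,v)) \subseteq N((j,v''))$ forces $i = j$ as soon as $N_H(v) \neq \emptyset$ (the case $N_H(v) = \emptyset$ occurs only for the single-vertex $H$, where the statement is trivial); it then reads $N_H(v) \subseteq N_H(v'')$, so $v''$ dominates $v$ in $H$ and $v \notin V(H')$. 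Thus $H \to H \setminus \{v\}$ is a legitimate dismantling step, and it remains to find a dismantling of $H \setminus \{v\}$ to $H'$.

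To close the induction I must descend to $B(H \setminus \{v\}) = B(H) \setminus \{(0,v),(1,v)\}$. Deleting the first step shows $B(H) \setminus \{(i,v)\}$ dismantles to $B(H')$. In this graph $(1-i,v)$ is still dominated by $(1-i,v'')$ (again by the neighbourhood formula, both sides losing only the common element $(i,v)$ in the looped case), so Fact \ref{fact:dismantreorder} lets me assume the next vertex removed is $(1-i,v)$, yielding a dismantling of $B(H \setminus \{v\})$ to $B(H')$. The inductive hypothesis then supplies a dismantling of $H \setminus \{v\}$ to $H'$, which prepended with the step $H \to H \setminus \{v\}$ finishes the proof. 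I expect the reverse direction to be the only real obstacle: the crux is recognising that domination in $B(H)$ is forced to be layer-preserving, and then invoking Fact \ref{fact:dismantreorder} to re-pair the two copies $(0,v),(1,v)$ that an arbitrary dismantling of $B(H)$ may well remove far apart in its sequence.
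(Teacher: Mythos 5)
Your proof is correct and takes essentially the same approach as the paper: lift each dismantling step of $H$ to two consecutive steps in $B(H)$ (one per layer), and for the converse project the layer structure back down. Your reverse direction is in fact argued more carefully than the paper's, which simply reads off the layer-$0$ steps of the given dismantling of $B(H)$; your explicit observation that domination in $B(H)$ is layer-preserving, together with the use of Fact~\ref{fact:dismantreorder} to re-pair the two copies of each removed vertex, fills in details the paper leaves implicit.
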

    \begin{proof}
       Let $d_1, d_2, \dots, d_\ell$ be a dismantling of $H$ to $R$ where $d_i$ dismantles the vertex $v_i$,  and let $R_i = \bar{d}_i(H)$.  Let $a_i: B( R_i) \to B(R_i)$ be the map that dismantles $(0,v_i)$ to $(0,d_i(v_i))$, and let $b_i$ be the map that dismantles $(1,v_i)$ to $(1,d_i(v_i))$. Their composition takes $B(R_i)$ to $B(R_{i+1})$, and so 
       \begin{equation}\label{eq:BipResDis}
         a_1, b_1, a_2, b_2, \dots, a_\ell, b_\ell
       \end{equation}  
       is a dismantling of $B(H)$ to $B(R)$.
       
       On the other hand let $d_1, d_2, \dots, d_{2\ell}$ be a dismantling of $B(H)$ to $B(R)$.  Half of these maps dismantle vertices of the form $(0,v)$. Where the $i^{th}$ such map dismantles $(0,v_i)$ to $(0,v'_i)$, let $s_i$ be the map defined on $\bar{s}_{i-1}(H)$ that dismantles $v_i$ to $v_i'$. Then 
       \begin{equation}\label{eq:ShadDis}
        s_1, s_2, \dots, s_\ell  
       \end{equation} 
       is a dismantling of $H$ to $R$.
      \end{proof} 
      
      \begin{definition}\label{def:BipResAndShad}
        Given a dismantling $d_1, \dots, d_\ell$ of $H$ to $R$, the  dismantling \eqref{eq:BipResDis} of $B(H)$ to $B(R)$ is its
        {\em bipartite resolution}.  
        Given a dismantling $d_1, \dots d_{2\ell}$, the dismantling 
        \eqref{eq:ShadDis} is its {\em symmetric shadow}.
      \end{definition}  
      
   Basic to \cite{BW00} was the observation that a dismantling $d_1, \dots, d_\ell$ of $H$ to $R$ induces a path   
   \begin{equation*}
    \id, d_1, \bar{d}_2, \dots, \bar{d}_\ell
   \end{equation*}    
   from the identity $\id: H \to H$ to the retraction $\bar{d}_\ell$ of $H$ to $R$ in 
   $\bHom(H,H)$. 
   The following extension of this idea for reflexive graphs is Lemma 2.5 of \cite{LLZ05} and for more general graphs is Lemma 5.3 of \cite{LLT07}.
   
   \begin{lemma}\label{lem:dismant}
      Let $R$ be a subgraph of a graph $H$ and $\id_{R}$ be the $H$-precolouring of $H$ that is the restriction to $R$ of the identity map $\id: H \to H$. The graph $H$ dismantles to $R$ if and only if there is a path in $\bHom(H,H;\id_{R})$ from $\id$ to a retraction $r:H \to R$.
   \end{lemma}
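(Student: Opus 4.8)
For the forward direction I would use exactly the observation of \cite{BW00} recalled just before the statement: a dismantling $d_1,\dots,d_\ell$ of $H$ to $H'$ induces the walk $\id, \bar d_1, \bar d_2, \dots, \bar d_\ell$ in $\bHom(H,H)$, ending at the retraction $r = \bar d_\ell$ onto $H'$. The only thing to add is that this walk lies in the \emph{relative} Hom-graph $\bHom(H,H;\id_{H'})$. This is immediate: writing $R_i = \bar d_i(H)$, each dismantled vertex $v_i$ lies outside $V(H')$, so $H' \subseteq R_i$ for every $i$, and hence each $\bar d_i$ restricts to the identity on $V(H')$, i.e. it extends $\id_{H'}$. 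The adjacency of consecutive steps is the standard domination check from \cite{BW00}.

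For the converse, which I expect to be the substantive direction, I would argue by induction on $|V(H)| - |V(H')|$, peeling off one dismantling step per inductive step. Suppose we are given a walk $h\colon I_\ell\times H \to H$ with $h_0 = \id$, each $h_i$ extending $\id_{H'}$, and $h_\ell = r$ a retraction onto $H'$; if $H = H'$ there is nothing to do, so assume $V(H') \subsetneq V(H)$, whence $r \neq \id$ and some step is not the identity. Let $j$ be least with $h_j \neq \id$; then $h_{j-1} = \id$ is adjacent to $h_j$. The first key point is to read off what this adjacency says: for every edge $u \sim v$ we get $u = \id(u) \sim h_j(v)$, so $N(v) \subseteq N(h_j(v))$, that is, \emph{$h_j(v)$ dominates $v$ for every vertex $v$}. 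Since $h_j \neq \id$ there is a $v_0$ with $w_0 := h_j(v_0) \neq v_0$, and since $h_j$ fixes $V(H')$ we have $v_0 \notin V(H')$. Thus $v_0$ is dominated by $w_0$ and may be dismantled: let $d\colon H \to H'' := H \setminus \{v_0\}$ be the dismantling retraction sending $v_0 \mapsto w_0$, and note $H' \subseteq H''$.

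The second key point is to transport the walk down to $H''$. I would set $h'_i := d \circ (h_i|_{H''})$, a map $H'' \to H''$. Since $d$ is a homomorphism and restriction to the induced subgraph $H''$ preserves edges, each consecutive pair $h'_i, h'_{i+1}$ remains adjacent: from $h_i(u) \sim h_{i+1}(v)$ in $H$ we get $d(h_i(u)) \sim d(h_{i+1}(v))$. As $d$ fixes $H''$ we have $h'_0 = \id_{H''}$; as $r$ maps into $H'$ and $d$ fixes $H'$ we have $h'_\ell = r|_{H''}$, a retraction of $H''$ onto $H'$; and each $h'_i$ still fixes $V(H')$. Hence $h'$ is a walk in $\bHom(H'',H'';\id_{H'})$ from $\id_{H''}$ to a retraction onto $H'$. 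By the inductive hypothesis $H''$ dismantles to $H'$, and prepending $d$ gives a dismantling of $H$ to $H'$.

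The only real obstacle is the converse, and within it the two observations above: that the first genuine move $h_j$ simultaneously dominates every vertex (so in particular exhibits a dismantlable vertex outside $H'$), and that post-composing the walk with the dismantling retraction $d$ keeps it a valid walk in the smaller relative Hom-graph. Both are short once the adjacency condition is unwound; everything else is bookkeeping for the induction on $|V(H)| - |V(H')|$.
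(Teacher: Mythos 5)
Your proof is correct and complete in both directions. The paper does not prove this lemma itself --- it states the forward direction as the displayed observation from \cite{BW00} just above and cites \cite{LLZ05} and \cite{LLT07} for the full equivalence --- but your argument (each $\bar d_i$ fixes $V(H')$ for the forward direction; for the converse, reading off from the first non-identity step $h_j$ that $N(v)\subseteq N(h_j(v))$, dismantling one such dominated vertex outside $V(H')$, and pushing the walk down to $H\setminus\{v_0\}$ by post-composing with the retraction) is precisely the standard proof used in those references.
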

    
    This yields the following useful corollary by observing that a graph $H$ retracts to any looped vertex, and a bipartite graph $H$ retracts to any edge.  
   \begin{fact}\label{fact:dismant}
     If $H$ is $\Ext$-trivial, then it dismantles to any looped vertex.
     If $H$ is bipartite $\Ext$-trivial, then it dismantles to any edge. 
   \end{fact}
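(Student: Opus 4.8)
The plan is to apply Lemma~\ref{lem:dismant} directly, instantiating its relative Hom-graph with $G = H$. For a subgraph $H'$ of $H$, write $p$ for the partial $H$-colouring $\id_{H'}$ of Lemma~\ref{lem:dismant}, i.e.\ the restriction of $\id$ to $V(H')$. By that lemma, to show $H$ dismantles to $H'$ it suffices to exhibit a path in $\bHom(H,H;p)$ from $\id$ to some retraction $r \colon H \to H'$. The identity always lies in $\bHom(H,H;p)$, since it extends $p$ trivially, so the only work is to produce a suitable retraction $r$ lying in the same relative Hom-graph and then invoke (bipartite) $\Ext$-triviality to connect the two.

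For the first statement, let $v$ be a looped vertex and take $H' = \{v\}$. Since $v$ carries a loop, the constant map $r \colon H \to H$ sending every vertex to $v$ is a homomorphism, and it is a retraction onto $H' = \{v\}$ that extends $p$; hence both $\id$ and $r$ belong to $\bHom(H,H;p)$. Because $H$ is $\Ext$-trivial, $\bHom(H,H;p)$ is connected, so there is a path from $\id$ to $r$, and Lemma~\ref{lem:dismant} gives that $H$ dismantles to $v$.

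For the second statement, let $e = ab$ be an edge with $a$ black and $b$ white, and take $H' = e$. As $H$ is connected and bipartite, the map $r \colon H \to H$ sending every black vertex to $a$ and every white vertex to $b$ is a homomorphism, since each edge of $H$ joins the two colour classes and is therefore sent to the edge $ab$; moreover $r$ is a retraction onto $e$ that extends $p$ and preserves the colour classes. Thus both $\id$ and $r$ lie in $\bHom_B(H,H;p)$, which is connected by bipartite $\Ext$-triviality; the resulting path sits inside $\bHom(H,H;p)$, so Lemma~\ref{lem:dismant} again applies and $H$ dismantles to $e$.

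I do not anticipate a genuine obstacle here: the content is carried entirely by Lemma~\ref{lem:dismant}, and all that remains is the elementary verification that the constant map onto a looped vertex, and the fold of a bipartite graph onto one of its edges, are the required retractions. The only point demanding a moment's care is that bipartite $\Ext$-triviality is phrased in terms of $\bHom_B$ rather than $\bHom$; but since $\bHom_B(H,H;p)$ is an induced subgraph of $\bHom(H,H;p)$, any path it provides is also a path in $\bHom(H,H;p)$, which is exactly what Lemma~\ref{lem:dismant} requires.
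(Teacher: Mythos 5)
Your proof is correct and is exactly the argument the paper intends: the paper derives this fact from Lemma~\ref{lem:dismant} ``by observing that a graph retracts to any looped vertex, and a bipartite graph retracts to any edge,'' which is precisely the constant map and the bipartite fold you construct, followed by connectivity of the relative Hom-graph. Your extra remark that a path in $\bHom_B(H,H;p)$ is also a path in $\bHom(H,H;p)$ is a correct and worthwhile clarification of a point the paper leaves implicit.
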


    \subsection{Critical tree obstructions}

      A {\em critical obstruction} $(T,p)$ for a graph $H$ consists of a graph $T$ and an $H$-precolouring $p$ of $T$ such that  no homomorphism $\phi: T \to H$ extends $p$, but for any proper subgraph $T'$ of $T$ there exists a homomorphism $\phi: T' \to H$ extending $p$ (or its restriction to $T'$).
      
      It is well known (see \cite{Zad97}) that a graph has a \kNU{} polymorphism if and only if it has no critical obstructions $(T,p)$ with $|p| = k$.  A critical obstruction $(T,p)$ is a 
      {\em critical tree obstruction} if $T$ is a tree and $p$ is defined on its set of leaves. 
      In \cite{LLZ05} it was shown for reflexive graphs that $H$ has a \kNU{} polymorphism if and only if it has no critical tree obstructions with $k$ leaves.  The same was shown for irreflexive graphs in \cite{NU2} using a characterisation of structures with tree duality from \cite{LLT07}.

   \section{Characterisations of ${\rm Ext}$-trivial and bipartite ${\rm Ext}$-trivial graphs}\label{sect:ME}

    The {\em diagonal} $\Delta = \Delta(H^2)$ of $H^2$ is the subgraph isomorphic to $H$ induced on the vertex set $\{ (x,x) \mid x \in V(H)\}$. 
    If $H$ is bipartite then $H^2$ is disconnected and cannot dismantle to $\Delta$.
    The {\em diagonal component} $C_\Delta(H^2)$ of $H^2$ is the component of $H^2$ containing $\Delta$. 
    
  Note that unless $H$ is bipartite, $C_\Delta(H^2) = H^2$. Letting $\Hom_B(G,H;p)$ be just $\Hom(G,H;p)$ when $H$ is not bipartite will allow us to unify the bipartite and non-bipartite cases of many statements and some proofs.  
 In this section we show the following.

  \begin{theorem}\label{thm:LLZ1} For a finite connected graph $H$, the following are equivalent.
  \begin{enumerate}
      \item The graph $H$ is an $\NU$ graph. 
      \item The graph $C_\Delta(H^2)$ dismantles to $\Delta(H^2)$. 
      \item The graph $H$ is $\Ext$-trivial or bipartite $\Ext$-trivial.  
  \end{enumerate}
  \end{theorem}  
  We break the proof of Theorem \ref{thm:LLZ1} into two subsections.   The equivalence of items $(2)$ and $(3)$ is proved in Subsection \ref{sub:charDism}, and that of $(2)$ and $(1)$ is proved in Subsection \ref{sub:charNU}.  
  Before moving on to the proof of Theorem \ref{thm:LLZ1} we observe some useful corollaries.

  In \cite{LLT07} it was shown that finite tree duality for a structure $H$ is characterised by the fact that $H^2$ dismantles to a retraction of its diagonal. Thus we get the following from Theorem \ref{thm:LLZ1}, extending a result proved given for reflexive graphs in \cite{LLZ05} and for irreflexive graphs in \cite{NU2}. 
  
    \begin{corollary}\label{cor:treeDuals}
     A graph $H$ admits a \kNU{} polymorphism if an only if it has no critical tree obstructions with $k$ or more leaves. 
    \end{corollary}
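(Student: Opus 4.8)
The plan is to obtain the corollary by assembling three ingredients that the preceding text has prepared: Zádori's theorem (\cite{Zad97}), that $H$ is a \kNU{} graph exactly when it has no critical obstruction $(T,p)$ with $|p|=k$; the characterisation of \cite{LLT07}, that finite tree duality for $H$ is equivalent to the dismantling of $H^2$ onto a retract of its diagonal; and the new Theorem \ref{thm:LLZ1}, that $H$ admits an $\NU$ polymorphism iff $C_\Delta(H^2)$ dismantles to $\Delta$. Since a critical tree obstruction with $m$ leaves is precisely a critical obstruction $(T,p)$ with $|p|=m$, the corollary says that for the $\NU$ property one may restrict from arbitrary critical obstructions to critical tree ones; Zádori already handles arbitrary obstructions, so the whole content is that trees suffice, and this is what the dismantling equivalences supply.

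I would dispatch the forward implication first, as it is routine. If $H$ is a \kNU{} graph then, by the standard fact that a \kNU{} polymorphism generates $m$-$\NU$ polymorphisms for every $m\ge k$, $H$ is an $m$-$\NU$ graph for all $m\ge k$. Zádori's theorem then gives that $H$ has no critical obstruction with $|p|=m$ for any $m\ge k$, hence none with $|p|\ge k$, and in particular no critical tree obstruction with $k$ or more leaves.

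For the converse I would work through the dismantling conditions. The hypothesis ``$H$ has no critical tree obstruction with $k$ or more leaves'' bounds the critical tree obstructions and is the quantitative form of finite tree duality, so \cite{LLT07} yields a dismantling of $H^2$ onto a retract of its diagonal. I would then match this with condition $(2)$ of Theorem \ref{thm:LLZ1}: for non-bipartite $H$ one has $C_\Delta(H^2)=H^2$ and the two statements are about the same object, while for bipartite $H$, where $H^2$ is disconnected, I would pass to the diagonal component using Lemma \ref{lem:disred}, Fact \ref{fact:BNU}, and the retraction of an $\NU$ graph to a looped vertex or an edge from Fact \ref{fact:dismant}. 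Theorem \ref{thm:LLZ1} then furnishes an $\NU$ polymorphism, and the exact arity is recovered by feeding this back through Zádori's arity-precise statement together with the monotonicity used in the forward direction.

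The hard part will be exactly this reconciliation of the \cite{LLT07} dismantling with condition $(2)$ of Theorem \ref{thm:LLZ1}. The footnote to that theorem cautions that dismantling $H^2$ onto a \emph{substructure} of the diagonal (the mixing notion of \cite{BBDL21}) is strictly weaker than dismantling it onto the \emph{whole} diagonal (our $\NU$ condition), so I must be careful that ``a retract of the diagonal'' in \cite{LLT07} really does force dismantling onto all of $\Delta$ and is not merely the weaker substructure condition; one inclusion is immediate since $\Delta$ is a retract of itself, and it is the reverse that carries the weight and is precisely where Theorem \ref{thm:LLZ1} is doing its job. The second delicate point, already noted above, is the bookkeeping of the parameter: Theorem \ref{thm:LLZ1} is stated without reference to arity, so the agreement between ``$k$ or more leaves'' and ``\kNU{}'' has to be extracted from Zádori's theorem rather than from the dismantling equivalences themselves, exactly as in the reflexive and irreflexive cases of \cite{LLZ05} and \cite{NU2} that this corollary generalises.
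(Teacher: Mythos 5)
Your proposal assembles exactly the ingredients the paper itself uses: Z\'adori's arity-precise criterion, the \cite{LLT07} equivalence between finite tree duality and the dismantling of $H^2$ to a retract of its diagonal, and Theorem \ref{thm:LLZ1}; the paper's own justification of the corollary is precisely this one-line combination. Your additional care over the arity bookkeeping and over reconciling ``retract of the diagonal'' with ``the whole diagonal'' only makes explicit what the paper leaves implicit (in its footnote and in its proof of $(2)\Rightarrow(1)$ via LLT07's core argument), so this is essentially the same proof.
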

    
    Observing for a non-bipartite graph that a critical tree obstruction in $B(H)$ translates to a critical tree obstruction in $H$, we get the following converse of Fact \ref{fact:BNU}. This fact, for conservative $\NU$ polymorphisms, is known to follow from \cite{FHH}, but is far from trivial.
  
  \begin{corollary}\label{cor:BNU}
    A non-bipartite graph $H$ has an $\NU$-polymorphism if and only if $B(H)$ does. 
  \end{corollary}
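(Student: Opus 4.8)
The plan is to prove the non-trivial implication---that $B(H)$ admitting an $\NU$ polymorphism forces $H$ to admit one---via the tree-duality characterisation of Corollary \ref{cor:treeDuals}; the reverse implication is already Fact \ref{fact:BNU}. The crucial preliminary point is that, since $H$ is connected and non-bipartite, $B(H) = K_2 \times H$ is connected, so Corollary \ref{cor:treeDuals} applies to $B(H)$ exactly as it does to $H$. It then suffices to compare the numbers of leaves occurring in the critical tree obstructions of the two graphs: $H$ fails to admit any $\NU$ polymorphism precisely when it has critical tree obstructions with arbitrarily many leaves, and the same holds for $B(H)$.

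The key observation is that a homomorphism $h \colon T \to B(H) = K_2 \times H$ is exactly a pair $(\sigma,\tau)$ with $\sigma = \pi_{K_2}\circ h \colon T \to K_2$ and $\tau = \pi_H \circ h \colon T \to H$, and that for a tree $T$ with bipartition $(A,B)$ there are only the two homomorphisms $T \to K_2$. Thus a partial $B(H)$-colouring $p$ of the leaves splits as $p=(p_1,p_2)$, and it extends to some $\sigma$ exactly when $p_1$ is consistent with the bipartition of $T$. I would use this to convert obstructions of $H$ into obstructions of $B(H)$. Given a critical tree obstruction $(T,q)$ of $H$ with $k$ leaves, let $p_1$ be the restriction to the leaves of the colouring $A\mapsto 0$, $B\mapsto 1$, and set $p=(p_1,q)$. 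Then any $\sigma$ extending $p_1$ is forced to be this bipartition colouring, so an extension of $p$ would require an extension $\tau$ of $q$ to $T\to H$, which does not exist; while for every proper subgraph $T'\subsetneq T$ the obstruction $(T',q)$ has a colouring $\tau'$ by criticality of $(T,q)$, and $p_1$ restricts to a bipartition-consistent colouring on each component of $T'$, giving a $\sigma'$. Hence $(T,p)$ is a critical tree obstruction of $B(H)$ with the same $k$ leaves. Consequently, if $H$ has critical tree obstructions with unboundedly many leaves, so does $B(H)$; taking the contrapositive and applying Corollary \ref{cor:treeDuals} to the connected graph $B(H)$ yields the converse of Fact \ref{fact:BNU}.

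To see that the leaf counts actually coincide (recovering Fact \ref{fact:BNU} as well), I would run the correspondence in reverse: given a critical tree obstruction $(T,p)$ of $B(H)$ with $p=(p_1,p_2)$, either $p_1$ is bipartition-consistent, in which case the same factoring shows $(T,p_2)$ is a critical tree obstruction of $H$ with the same leaves, or $p_1$ is inconsistent, in which case some pair of leaves already violates the parity condition for $\sigma$ to exist, and criticality forces $T$ to be the path joining that pair---so $(T,p)$ has only two leaves. Thus every critical tree obstruction of $B(H)$ with at least three leaves descends to one of $H$. The point needing the most care---and the main obstacle---is exactly this parity case: were $H$ bipartite, $B(H)$ would be disconnected, Corollary \ref{cor:treeDuals} would no longer apply to it, and the parity obstructions could not be dismissed as bounded. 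The non-bipartiteness of $H$ is precisely what makes $B(H)$ connected and lets the whole argument go through.
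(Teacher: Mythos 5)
Your proof is correct and follows essentially the same route as the paper, which disposes of this corollary in a single sentence by observing that critical tree obstructions translate between $H$ and $B(H)$ and then invoking Corollary \ref{cor:treeDuals}. You have simply supplied the details of that translation (the factorisation of homomorphisms into $K_2\times H$, the two-leaf parity case, and the role of non-bipartiteness in keeping $B(H)$ connected so that Corollary \ref{cor:treeDuals} applies to it), and in particular you carry out the lifting in the direction $H\to B(H)$ that the contrapositive of the new implication actually requires.
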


   \subsection{Characterisation by dismantling}\label{sub:charDism}
   
   In this section we prove the equivalence of  items $(2)$ and $(3)$ of Theorem \ref{thm:LLZ1}.  The proof follows the ideas of the proof from \cite{LLZ05} for reflexive graphs that we sketched in the introduction, only we tailor their proofs to include our definition of $\Ext$-trivial.  Besides this definition, our main contributions in this section are observing that the proofs work on general graphs using $C_\Delta(H^2)$ where they used $H^2$,  and paying attention to the lengths of the paths involved.  

 The length of a path is the number of edges. Recall that if $H$ is not bipartite, then $\bHom_B(G,H;p)$ is just $\bHom(G,H;p)$.

    \begin{lemma}\label{lem:Dis1}
       If $C_\Delta(H^2)$ dismantles to $\Delta(H^2)$ then for any $\phi$ and $\psi$ in $\bHom_B(G,H;p)$ there is a walk  from $\phi$ to $\psi$ in $\bHom_B(G,H;p)$, and so $H$ is $\Ext$-trivial or bipartite $\Ext$-trivial.  This walk has length at most $2(n_H^2 -  n_H)$ and at most $n_H^2 -2n_H-2$ if $H$ is bipartite.
    \end{lemma}
    \begin{proof}
     Assume that there is a dismantling of $C_\Delta(H^2)$ to $\Delta = \Delta(H^2)$. By Lemma \ref{lem:dismant} this corresponds to a walk $r_0, \dots, r_d$ from the identity $r_0 = \id$ to a retraction $r_d$ to $\Delta$ in $\bHom_B(C_\Delta(H^2),C_\Delta(H^2); \id_\Delta)
     \leq \bHom_B(H^2,H^2;\id_\Delta)$. 
     Defining the map $a_i$ from $G$ to $H$ by 
     \[ a_i = \pi_1 \circ r_i \circ (\phi \times \psi),  \]   
     the sequence $a_0, a_1 \dots, a_d$ is a walk in $\bHom_B(G,H)$ from $\phi$ to $a_d$. For a vertex $v$ in the support of $p$ we get   
       \[ a_i(v) = \pi_1 \circ r_i \circ (\phi \times \psi)(v)
                 = \pi_1 \circ r_i((p(v),p(v))) = \pi_1((p(v),p(v))) = p(v) \]
     showing that this walk is indeed in $\bHom_B(G,H;p)$.  
     
     Similarly the sequence $b_d, \dots, b_0$ defined by 
       \[ b_i = \pi_2 \circ r_i \circ (\phi \times \psi)  \]
     is a walk in $\bHom_B(G,H;p)$ from $b_d$ to $\psi$.
     
     But $a_d = b_d$, as $r_d$ maps to $\Delta$ so maps under $\pi_1$ and $\pi_2$ to the same place.  Thus we have a walk 
     \begin{equation}\label{eq:abwalk}
         a_0, a_1, \dots a_d,b_{d-1}, \dots, b_0
     \end{equation} of length $2d$ from $\phi$ to $\psi$ in $\bHom_B(G,H;p)$. 

     In the case that $H$ is bipartite we can improve this a little.  Indeed, as $r_d$ dismantles some final vertex $(a,b)$ to the diagonal, we have that $r_{d-1}(C_\Delta(H^2)) = \Delta \cup \{(a,b)\}$. Thus $a_{d-1}$ and $b_{d-1}$ differ only on vertices $v$ such that  $r_{d-1} \circ (\phi \times \psi)(v) = (a,b)$. For such vertices $v$ we have $a_{d-1}(v) = a$ and $b_{d-1}(v) = b$.  Now, for any neighbour $u$ of such a $v$, $r_{d-1} \circ (\phi \times \psi)(u)$ is on the diagonal, so $a_{d-1}(u) = b_{d-1}(u)$ and so we get $a_{d-1}(v) \sim a_{d-1}(u) = b_{d-1}(u)$.   Thus $a_{d-1} \sim b_{d-1}$ and we have a walk

     \begin{equation}\label{eq:abwalkB}
         a_0, a_1, \dots a_{d-1},b_{d-1}, \dots, b_0
     \end{equation} of length $2d-1$ from $\phi$ to $\psi$ in $\bHom_B(G,H;p)$. 
     
     As $C_\Delta(H^2)$ has $n^2_H$ vertices if $H$ is non-bipartite (half of this if $H$ is bipartite), it dismantles to $\Delta$ in  $d = n_H^2 - n_H$ (respectively $d = \frac{1}{2}n_H^2 -n_H$) steps. 
     Using these $d$ in  \eqref{eq:abwalk} and \eqref{eq:abwalkB}, we get the claimed length calculations.
    \end{proof}

      The converse implication is even easier. 
    
    \begin{lemma}\label{lem:Dis2}
       If $H$ is (bipartite) $\Ext$-trivial then $C_\Delta(H^2)$ dismantles to $\Delta(H^2)$. 
    \end{lemma}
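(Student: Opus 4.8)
The plan is to run the construction of Lemma \ref{lem:Dis1} in reverse, converting the connectivity guaranteed by triviality back into a dismantling. By Lemma \ref{lem:dismant} applied with ambient graph $C_\Delta(H^2)$ and subgraph $\Delta = \Delta(H^2)$, it suffices to exhibit a path in $\bHom(C_\Delta(H^2), C_\Delta(H^2); \id_\Delta)$ from $\id$ to some retraction $r \colon C_\Delta(H^2) \to \Delta$. I would obtain this path from a walk of $H$-colourings of $C_\Delta(H^2)$ supplied by the triviality hypothesis, then post-compose with $\pi_1$ to lift it back up into the endomorphisms of $C_\Delta(H^2)$.

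First I would identify the two colourings to be connected. Take $G = C_\Delta(H^2)$ and $p = p_\Delta$. The restrictions of the projections $\pi_1$ and $\pi_2$ to $C_\Delta(H^2)$ are homomorphisms to $H$ that both extend $p_\Delta$, since $\pi_j(x,x) = x = p_\Delta(x,x)$. Fixing the bipartition of $C_\Delta(H^2)$ in which $(u,v)$ is black exactly when $u$ is black in $H$ --- equivalently when $v$ is, as $C_\Delta(H^2)$ consists of monochromatic pairs --- both $\pi_1$ and $\pi_2$ send black vertices to black vertices, so both lie in $\bHom_B(C_\Delta(H^2), H; p_\Delta)$. As $H$ is bipartite $\Ext$-trivial, this graph is connected, so there is a walk $h \colon I_\ell \times C_\Delta(H^2) \to H$ with $h_0 = \pi_1$, $h_\ell = \pi_2$, each step $h_i$ extending $p_\Delta$.

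The key step is the construction $g_i := \pi_1 \times h_i \colon C_\Delta(H^2) \to H^2$, $(u,v) \mapsto (u, h_i(u,v))$. Each $g_i$ is a homomorphism, and since $h_i$ extends $p_\Delta$ we get $g_i(x,x) = (x, h_i(x,x)) = (x,x)$, so $g_i$ extends $\id_\Delta$ and its (connected) image meets $\Delta$, forcing it to land inside the diagonal component $C_\Delta(H^2)$. By Fact \ref{fact:times} applied to the constant walk at $\pi_1$ and the walk $h$, the sequence $g_0, \dots, g_\ell$ is a walk in $\bHom(C_\Delta(H^2), C_\Delta(H^2); \id_\Delta)$. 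At the endpoints $g_\ell = \pi_1 \times \pi_2 = \id$, while $g_0 = \pi_1 \times \pi_1$ sends every $(u,v)$ to $(u,u)$ and fixes $\Delta$, so it is a retraction onto $\Delta$. Reversing the walk gives the required path from $\id$ to a retraction, and Lemma \ref{lem:dismant} then yields that $C_\Delta(H^2)$ dismantles to $\Delta(H^2)$. The main obstacle is bookkeeping rather than a new idea: one must check that $\pi_1$ and $\pi_2$ genuinely fall in the same class $\bHom_B$ under a correctly chosen bipartition, and that each $g_i$ stays in the diagonal component $C_\Delta(H^2)$ rather than drifting into the other component of $H^2$. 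The non-bipartite case is handled, as the surrounding text notes, by replacing $\bHom_B$ with $\bHom$ throughout, using $C_\Delta(H^2) = H^2$.
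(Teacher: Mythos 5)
Your proof is correct and follows the same route as the paper's: apply (bipartite) $\Ext$-triviality to $G = C_\Delta(H^2)$ with $p = p_\Delta$ to connect $\pi_1$ and $\pi_2$, then multiply the resulting walk by $\pi_1$ to get a path in $\bHom(C_\Delta(H^2),C_\Delta(H^2);\id_\Delta)$ from $\id = \pi_1\times\pi_2$ to the retraction $\pi_1\times\pi_1$ onto $\Delta$, and invoke Lemma \ref{lem:dismant}. The paper's own write-up is terser; your version just supplies the bookkeeping (membership of both projections in $\bHom_B$, and each intermediate map landing in the diagonal component) that it leaves implicit.
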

    \begin{proof}
       
     Assume that $H$ is (bipartite) $\Ext$-trivial, so $\bHom_B(G,H;p)$ is connected for all $(G,p)$.  Where $\pi_1: C_\Delta(H^2) \to H$ is the restriction of the first projection and $i: H \to H \times H: v \mapsto (v,v)$ is the diagonal injection, the restriction $p$ of $i \circ \pi_1$ to $\Delta(H^2)$ is the identity. 
     Thus there is a path in $\bHom_B(C_\Delta(H^2),\Delta(H^2);p)$ from $\id$ to $i \circ \pi_1$, which by Lemma \ref{lem:dismant} gives a
     dismantling of $C_\Delta(H^2)$ to $\Delta(H^2)$. 
    \end{proof}
    
    We have proved, with these lemma, the equivalence of items $(2)$ and $(3)$ of Theorem \ref{thm:LLZ1}.

   \subsection{Characterisation by ${\rm NU}$-polymorphisms}\label{sub:charNU}

   In this subsection we finish the proof of Theorem \ref{thm:LLZ1} by showing the implications $(2) \Rightarrow (1)$ and $(1) \Rightarrow (3)$.

   We start with $(2) \Rightarrow (1)$. It is the harder implication, but it is mostly done elsewhere.  Indeed, as we have already seen that it holds for bipartite graphs by \cite{LLZ05}, it is enough to show that if $H^2$ dismantles to its diagonal, then $H$ has an $\NU$ polymorphism. This is essentially Theorem 5.7 and Corollary 4.5 of \cite{LLT07}.
   Indeed Theorem 5.7 of \cite{LLT07} tells us that if $H^2$ dismantles to its diagonal, then it has a first order definable CSP, and Corollary 4.5 tells us that if  it is a core with a first order definable CSP, then it has an $\NU$ polymorphism.
   The implication thus follows by the well known fact, stated for example in Lemma 4.3 of \cite{NU2}, that a graph has an $\NU$ polymorphism if and only if the core structure one gets from it by adding all unary singleton relations does.

  So we have only to show the implication $(1) \Rightarrow (3)$ of Theorem \ref{thm:LLZ1} which says that if $H$ is $\NU$ then it is $\Ext$-trivial.  Well this already holds if $H$ is bipartite, as in this case we have the implication $(1) \Rightarrow (2)$ from from \cite{NU2}, and in the previous subsection we showed the equivalence of $(2)$ and $(3)$.  We will use this to get the implication $(1) \Rightarrow (3)$ for non-bipartite $H$.   Indeed we show first that if $H$ is $\NU$, then it is dismantlable; we then use dismantlability and the $\NU$ polymorphism to get that $H$ is $\Ext$-trivial.

  \begin{lemma}\label{lem:disloop} 
      Any bipartite $\NU$ graph dismantles to any edge. 
      Any non-bipartite $\NU$ graph dismantles to any looped vertex.
  \end{lemma}
  \begin{proof}
     The first statement is immediate from Fact  \ref{fact:dismant} as we have already proved that bipartite $\NU$ graphs are bipartite $\Ext$-trivial.  So let $H$ be a non-bipartite graph with an $\NU$ polymorphism.  The bipartite graph $B(H)$ also has an $\NU$ polymorphism by Fact \ref{fact:BNU}, so by the first statement of the lemma, $B(H)$ dismantles to any edge. In particular it dismantles to the edge $(0,v)(1,v)$ for any looped vertex $v$ of $H$.  By Lemma \ref{lem:disred} we get that $H$ dismantles to $v$.
  \end{proof}

   Let $H$ be any $\NU$ graph.  By Lemma \ref{lem:disloop} it is dismantlable or bipartite dismantlable.  By Theorem \ref{thm:BW00} or \ref{thm:BW00B} we therefore get that $\Hom_B(G,H)$ is connected for any $G$.  The following then shows that $\Hom_B(G,H;p)$ is connected for any $(G,p)$ completing the proof of implication $(1) \Rightarrow (3)$, and so the proof of  Theorem \ref{thm:LLZ1}.  
   
   \begin{proposition}\label{prop:discrel1}
    Let $H$ be a connected \kNU~ graph, and let $\phi$ and $\psi$ be vertices of $\bHom_B(G,H;p)$ for some $H$-precolouring $p$ of $G$. If there is a walk of length $\ell$ between $\phi$ and $\psi$ in $\bHom_B(G,H)$, then there is a walk of length $(k-2)\ell$ between them in $\bHom_B(G,H;p)$.
  \end{proposition}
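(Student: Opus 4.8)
The plan is to use the \kNU{} polymorphism $f\colon H^k\to H$ as a device that folds the given free walk together with two constant ``anchor'' maps at $\phi$ and $\psi$. The point of near-unanimity is that, at a vertex where the anchors agree, the anchors outvote the walk and pin the output down; this is exactly what will keep the constrained vertices of $p$ fixed while the two endpoints are allowed to slide from $\phi$ to $\psi$. Write the given walk in $\bHom(G,H)$ as $w_0=\phi,w_1,\dots,w_\ell=\psi$.

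First I would set up the construction in $k-2$ rounds indexed by $r\in\{2,\dots,k-1\}$. In round $r$, for $j\in\{0,\dots,\ell\}$, define $U_{r,j}\colon G\to H$ by applying $f$ coordinatewise to the $k$-tuple of maps that is $\psi$ in slots $1,\dots,r-1$, the walk map $w_j$ in slot $r$, and $\phi$ in slots $r+1,\dots,k$, so $U_{r,j}=f(\psi,\dots,\psi,w_j,\phi,\dots,\phi)$. Since $f$ is a polymorphism and, by Fact \ref{fact:times} applied iteratively, the coordinatewise product of the two constant walks (at $\phi$ and at $\psi$) with the walk $w$ is a walk in $\bHom(G,H^k)$, composing with $f$ shows that for each fixed $r$ the sequence $U_{r,0},\dots,U_{r,\ell}$ is a walk of length $\ell$ in $\bHom(G,H)$.

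Then three checks finish the argument. (a) Each $U_{r,j}$ extends $p$: at a constrained vertex $v$ we have $\phi(v)=\psi(v)=p(v)$, so $k-1$ of the $k$ coordinates fed to $f$ equal $p(v)$ (every slot except the walk slot), whence $U_{r,j}(v)=p(v)$ by near-unanimity; thus each round lives in $\bHom(G,H;p)$. (b) The endpoints are correct: $U_{2,0}=f(\psi,\phi,\dots,\phi)=\phi$ and $U_{k-1,\ell}=f(\psi,\dots,\psi,\phi)=\psi$, again by near-unanimity since in each case $k-1$ coordinates coincide. (c) Consecutive rounds glue: completing the walk in slot $r$ turns that slot's value from $w_0=\phi$ into $w_\ell=\psi$ in place, so $U_{r,\ell}$ and $U_{r+1,0}$ are $f$ applied to the very same tuple (namely $\psi$ in slots $1,\dots,r$ and $\phi$ in slots $r+1,\dots,k$); hence they are literally equal and no extra edge is needed. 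Concatenating rounds $r=2,\dots,k-1$ then gives $k-2$ walks of length $\ell$ joined at shared endpoints, for a total of $(k-2)\ell$ edges in $\bHom(G,H;p)$.

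The hard part will be step (c), and the subtlety there is that $f$ is not assumed symmetric: at a round boundary it is not enough for the two $k$-tuples to agree as multisets, they must agree slot by slot. This is precisely why I let the active walk slot in round $r$ be slot $r$ itself, so that finishing round $r$ converts slot $r$ from $\phi$ to $\psi$ while leaving all other slots untouched, making the boundary tuples coincide position-by-position. (The same non-symmetry is what rules out the more naive scheme of keeping the walk in one fixed slot and flipping a separate anchor at each boundary.) Specializing $k=3$ recovers the expected majority argument $U_{2,j}=f(\psi,w_j,\phi)$ of length $(k-2)\ell=\ell$.
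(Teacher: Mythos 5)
Your proposal is correct and follows essentially the same route as the paper: fold the unconstrained walk into one coordinate of the \kNU{} polymorphism $f$, with anchors $\psi$ and $\phi$ in the remaining coordinates, incrementing the number of $\psi$-anchors over $k-2$ rounds so that consecutive rounds share their boundary tuple slot-by-slot, and using near-unanimity both to pin down the endpoints and to fix the vertices in the domain of $p$. Your bookkeeping of the rounds (active slot $r$ in round $r$, rounds glued by literal equality of tuples) is in fact slightly more careful than the paper's indexing, which as printed double-counts the first round.
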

  \begin{proof}
  For $\phi$ and $\psi$ in $\bHom_B(G,H;p)$, let $h^* = h^*_1,h^*_2, \dots, h^*_\ell$ be a walk from $\phi$ to $\psi$ in $\bHom_B(G,H)$. We construct a walk $h$ from $\phi$ to $\psi$ in $\bHom_B(G,H;p)$.
      Indeed, where $f:H^k \to H$ is the \kNU~ polymorphism, 
     for $j = 1, \dots, k-2$ and $i = 1, \dots, \ell$ let 
     \[h_{j\cdot \ell + i} = f \circ (\underbrace{\psi \times \psi \times \dots \times \psi}_{j \mbox{ copies}} \times h^*_i \times \underbrace{\phi \times \dots \times \phi}_{k-j-1}). \]
     
     As $h^*_0 = \phi$ and $h^*_\ell = \psi$ this is a walk in $\bHom_B(G,H)$,  as $f$ is \kNU, it goes from \[f \circ (\psi \times \phi \times \dots \times \phi) = \phi \qquad \mbox{ to } \qquad f  \circ (\psi \times \dots \times \psi \times \phi) = \psi.\]
     On vertices $v$ in the support of $p$, $\phi(v) = \psi(v) = p(v)$, so all of these functions map $v$ to $p(v)$ by the fact that $f$ is \kNU.
     Thus we have a path of length $(k-2)\ell$ in $\bHom_B(G,H;p)$, as needed. 
  \end{proof}

    This completes the proof of Theorem \ref{thm:LLZ1}.

    \subsection{Diameter of $\bHom(G,H;p)$ and finding shortest paths}

    Our proofs give two bounds on the diameter of $\bHom_B(G,H;p)$ when $H$ is (bipartite) $\Ext$-trivial.

   \begin{corollary}\label{cor:diam2}
    For a \kNU{} graph $H$ with $n_H$ vertices, the graph $\bHom_B(G,H;p)$ has diameter at most $2n_H\min((k-2),n_H)$ or at most  $n_H\min(2(k-2),n_H)$ if $H$ is bipartite. 
   \end{corollary}
   \begin{proof}
     The bounds of $2n_H^2$ and $n_H^2$ on the diameter are given explicitly in Lemma~\ref{lem:Dis1}. The bound of $2n_H(k-2)$ comes from the bound $\ell(k-2)$ on the length of the walk in Proposition \ref{prop:discrel1} by observing that if $H$ dismantles to a vertex or edge, then it does so in at most $n_H-1$ steps, and so there is a path of length $\ell  < 2n_N$ between any two maps in $\bHom_B(G,H)$. 
   \end{proof} 
   
    While $(k-2)$ is smaller than $n_H$ in any examples I know, it is not clear that this is always true. Indeed, it is shown in \cite{BartoDrag}, see also \cite{ZhukNU}, that there are digraphs on $n$ vertices for which the smallest $\NU$ polymorphism have arity doubly exponential in $n$. We do not know if such symmetric graphs exist.

     It is known from \cite{FV99} that for a graph $H$ admitting a \kNU{} polymorphism, an instance $G$ of the problem $\Hom(H)$ can be solved in time $O(n_G^kn_H^k)$. For fixed $k$, this is polynomial in the order $n_G$ of the instance. A list version of the problem can be solved in this time, too.  
     This, with Corollary \ref{cor:diam2}, allows us to find a shortest path in $\bHom(G,H;p)$ between two vertices $\phi$ and $\psi$ as follows.

     For $\ell = 2, \dots, 2(k-2)n_H=:L$ solve the instance $\bHom(I_\ell \times G,H;q)$ where the $H$-precolouring $q$ of  $I_\ell \times G$ is defined so that $q_0 = \phi$, $q_\ell = \psi$, and for all $i \in [\ell-1]$ and all vertices $g$ of $G$ in the support of $p$, $q_i(g) = p(g)$.  A solution, the existence of which we can determine in time polynomial in $n_G\cdot \ell$, so polynomial in $n_G$, yields a path from $\phi$ to $\psi$. If there is no solution, there is no such path  of length $\ell$ or shorter.  
     By Corollary \ref{cor:diam2} we will find a path by the time $\ell$ reaches $L$. This is polynomial in $n_G$. We record this. 
     
     \begin{corollary}\label{cor:polynomial}
      For fixed $k$, and \kNU{} graph $H$, there is a polynomial time algorithm for finding the shortest path between $\phi$ and $\psi$ for any instance $(G,p,\phi,\psi)$ of $\RE(H)$.     
     \end{corollary}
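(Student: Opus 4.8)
The plan is to reduce the shortest-path question to a short sequence of feasibility tests for the extension problem $\Ext(H)$, each of which is polynomially solvable because a (bipartite) $\Ext$-trivial graph $H$ is an $\NU$ graph, say with a \kNU{} polymorphism, by Theorem \ref{thm:LLZ1}. First I would invoke the walk--homomorphism dictionary from Section \ref{sect:bg}: a walk of length $\ell$ from $\phi$ to $\psi$ in $\bHom(G,H;p)$ is exactly a homomorphism $h\colon I_\ell \times G \to H$ with $h_0 = \phi$, $h_\ell = \psi$, and every $h_i$ extending $p$. Encoding these three requirements as a single partial $H$-colouring $q^{(\ell)}$ of $I_\ell \times G$ --- fixing layer $0$ to $\phi$, layer $\ell$ to $\psi$, and each interior copy $(i,g)$ of a vertex $g \in p$ to $p(g)$ --- this says precisely that $\bHom(I_\ell \times G, H; q^{(\ell)})$ is nonempty, i.e. that $(I_\ell \times G, q^{(\ell)})$ is a YES instance of $\Ext(H)$.

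The key structural observation is that, since $I_\ell$ is reflexive, a walk may stall; hence feasibility is monotone in $\ell$, a walk of length $\ell' \le \ell$ yielding one of length $\ell$. Thus the least $\ell$ for which $(I_\ell \times G, q^{(\ell)})$ is feasible is exactly the distance from $\phi$ to $\psi$, and the algorithm is to test $\ell = 0, 1, 2, \dots$ in turn, returning the first feasible length together with a witnessing homomorphism, which decodes into the desired shortest path. In the bipartite case I would note that $I_\ell \times G$ is connected (the loops of $I_\ell$ destroy bipartiteness), so fixing layer $0$ to $\phi \in \bHom_B(G,H;p)$ forces every layer into $\bHom_B$; no extra colour-class lists are needed and the same instances work, with the relevant diameter being that of $\bHom_B(G,H;p)$.

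Two facts make this run in polynomial time. Termination is guaranteed by Corollary \ref{cor:diam2}: since $\bHom(G,H;p)$ (resp. $\bHom_B(G,H;p)$) is connected of diameter at most $L := 2(k-2)n_H$, a feasible $\ell \le L$ is found after at most $L$ tests, and $L$ is a constant depending only on $H$. For the cost of a single test I would invoke the list version of the \cite{FV99} theorem: $\Ext(H)$ is a homomorphism problem into $H$ enriched with singleton (constant) relations, and the instance $(I_\ell \times G, q^{(\ell)})$ has $N = (\ell+1)n_G \le (L+1)n_G$ vertices, so it is solvable in time $O(N^k n_H^k)$, which for fixed $H$ is polynomial in $n_G$; running this at most $L$ times is polynomial as well.

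The main obstacle I anticipate is the justification that the extension (one-or-all-lists) problem really lands inside the polynomially solvable $\NU$-CSP regime, i.e. that naming the fixed vertices by constants does no harm. The point to nail down is that a \kNU{} polymorphism is idempotent and therefore preserves every singleton relation, so the enriched template still admits a \kNU{} polymorphism and \cite{FV99} applies verbatim to $\Ext(H)$. Everything else --- the product construction, the stalling monotonicity that identifies the least feasible $\ell$ with the distance, and the diameter-based termination bound --- is routine.
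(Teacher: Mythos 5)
Your proposal is correct and follows essentially the same route as the paper: testing feasibility of the extension instance $(I_\ell\times G,\,q^{(\ell)})$ for $\ell=1,2,\dots$ up to the diameter bound $L=2(k-2)n_H$ of Corollary~\ref{cor:diam2}, with each test solved in polynomial time via the list/extension version of the \cite{FV99} algorithm. Your added remarks on the monotonicity of feasibility in $\ell$ (via stalling on the reflexive $I_\ell$) and on idempotence justifying the singleton-enriched template are correct elaborations of points the paper leaves implicit.
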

     
     Though this algorithm is  polynomial in $n_G$ for fixed parameter $k$, it is exponential in $k$, and $k$ can grow exponentially in the size of $n_H$. A polynomial algorithm independent of $k$ would be quite interesting.  
     
    
     
    \section{Bounds on recolouring length}\label{sect:ReconLen}
    
   As mentioned in the introduction, the reconfiguration graph for $H$-extensions is often taken to be the `recolouring' subgraph $\bCol(G,H;p)$ of $\bHom(G,H;p)$ consisting of edges whose endpoints are homomorphisms differing on a single vertex.  If there is a edge between vertices $\phi$ and $\psi$ in $\bCol(G,H;p)$ then one can {\em resolve} it to a path of  edges of $\Col(G,H;p)$ simply by changing the vertices $v$ on which $\phi$ and $ \psi$ differ one at a time from $\phi(v)$ to $\psi(v)$. So there is a path between vertices $\phi$ and $\psi$  in $\bCol(G,H;p)$ if and only if there is one in $\bHom(G,H;p)$, though these paths generally have different length. For a path between vertices $\phi$ and $\psi$, we specify that it is a path in $\bCol(G,H;p)$ by calling it a {\em recolouring-path}.  The length of the shortest $(\phi,\psi)$-recolouring path is the {\em recolouring distance} $d_{\bCol}(\phi,\psi)$. 

 From Corollary \ref{cor:diam2} we get a trivial upper bound of 
   \begin{equation}\label{eq:reconbound}
     d_{\bCol}(\phi,\psi)  \leq  2n_Gn_H 
   \end{equation}
 on the recolouring distance for any instance $(G,p,\phi,\psi)$ of $\RE(H)$ where $H$ is \kNU{}. In this section, we improve this when $H$ is \kNU[3].

 To begin with, observe that there is also a trivial lower bound on the recolouring distance of two colourings. We say that a {\em recolouring step}, or an edge $\phi\phi'$ in $\bCol(G,H)$, {\em moves} a vertex $v$ if $\phi'(v) \neq \phi(v)$.  A reflexive vertex $v$ of $G$ must move at least $d_H(\psi(v),\phi(v))$ times as we recolour $\phi$ to $\psi$; an irreflexive vertex (with an edge) must move at least half this many times. So for $\phi$ and $\psi$ in $\bCol(G,H;p)$ we clearly get the following where $V_\ell$ and $V_u$ are respectively the sets of looped and unlooped vertices of $G$,
   \begin{equation}\label{eq:reconbound}
        \sum_{v \in V_\ell} d_H(\phi(v),\psi(v))  + \sum_{v \in V_u} \left\lceil \frac{1}{2}d_H(\phi(v), \psi(v)) \right\rceil \leq d_{\bCol}(\phi,\psi). 
  \end{equation}  
   
   Writing $|\psi - \phi|$ for $\sum_{v \in V(G)} d_H(\phi(v),\psi(v))$ and $\odd(\phi,\psi)$ for the number of vertices $v$ of $G$ for which $d_H(\phi(v), \psi(v))$ is odd,
   this specialises to the following.  

   \begin{fact}\label{fact:lowerbound}
     Let $G$ be a non-empty connected graph, and $\phi$ and $\psi$ be maps in the same component of $\bCol(G,H;p)$. 
     If $G$ is reflexive, then 
             \[ |\phi - \psi| \leq d_{\bCol}(\phi,\psi) \] 
        and if $G$ is bipartite, then
          \[ \frac{1}{2}( |\phi - \psi| + \odd(\phi,\psi) |) \leq d_{\bCol}(\phi,\psi). \] 
    \end{fact}

    Our main result in this section gives  complementary upper bounds, which at least in the cases that $G$ is reflexive or bipartite, are less than twice the  lower bounds. 

   \begin{theorem}\label{thm:recoldistbound}
     For any \kNU[3] graph $H$, any $H$-precolouring $p$ of a graph $G$, and any maps $\phi$ and $\psi$ in $\bCol(G,H;p)$, we have
         \[  d_{\bCol}(\phi,\psi) \leq |\phi - \psi| + \odd(\phi,\psi) - 1; \]
    in particular, this holds if $G$ is reflexive. 
    If $G$ is bipartite, then $H$ must be bipartite, and we have 
         \[  d_{\bCol}(\phi,\psi) \leq |\phi - \psi| - 1. \]
   \end{theorem}

   The proof of Theorem \ref{thm:recoldistbound} will take some work, and we put this off until the last subsection of this section.  Before we get to it, we give examples showing that these bounds are sharp. 

  \subsection{Sharpness examples for Theorem \ref{thm:recoldistbound}}

    \begin{figure}
     \begin{center}
    \begin{tikzpicture}[every node/.style = {vert}]
    
      \foreach \i in {0,1,2,3,4,5}{\foreach \j in {0,1,2,3,4,5}{
      \draw (\i,\j) node (v\i\j){};}}  
      \foreach \i[evaluate=\i as \ip using {int(\i+1)}] in {0,1,2,3,4}{\foreach \j[evaluate=\j as  \jp using {int(\j+1)}] in {0,1,2,3,4}{
      \draw[gray] (v\i\j) edge (v\i\jp) edge (v\ip\jp) edge (v\ip\j);
      \draw[gray] (v\i\jp) edge (v\ip\j);
      }}
      \draw[gray, thin] (v55) edge (v05) edge (v50);
      
      \draw[line width=1mm, blue] (v00) -- (v01) -- (v12) -- (v23) -- (v34) -- (v45) -- (v55) ; 
      \draw[line width=1mm, red] (v00) -- (v10) -- (v21) -- (v32) -- (v43) -- (v54) -- (v55) ; 
      
      \Vlabel[6mm]{v00}{left}{(0,0)}
      \Vlabel[4mm]{v10}{below}{(1,0)}
      \Vlabel[6mm]{v01}{left}{(0,1)}
      \Vlabel[6mm]{v55}{right}{(5,5)}
      \Vlabel[5mm]{v23}{above left}{\phi}
      \Vlabel[5mm]{v32}{below right}{\psi}

     \begin{scope}[xshift=8cm]
        \foreach \i in {0,1,2}{\foreach \j in {0,1,2,3,4,5}{ \draw (\i,\j) node (u\i\j){};}}
        \foreach \i in {0,1,2}{\foreach \j in {0,1,2,3,4,5}{ \draw[gray] (u0\j) -- (u1\j) -- (u2\j);
                                                                                    \draw[gray] (u\i0) -- (u\i1) --(u\i2) --(u\i3) --(u\i4) --(u\i5);  }}
      \draw[line width=1mm, blue] (u10) -- (u00) -- (u01) -- (u02) -- (u03) -- (u04) -- (u05) -- (u15) ; 
      \draw[line width=1mm, red]  (u10) -- (u20) -- (u21) -- (u22) -- (u23) -- (u24) -- (u25) -- (u15) ; 
      \Vlabel[5mm]{u02}{above left}{\phi}
      \Vlabel[5mm]{u22}{above right}{\psi}

     \end{scope}
    \end{tikzpicture}\caption{Pairs of paths $\phi$ and $\psi$ with recolouring distance \\ $|\phi - \psi| + \odd(\phi,\psi) - 1$ (left)  and $|\phi-\psi|-1$ (right)}\label{fig}
    \end{center}
    \end{figure}
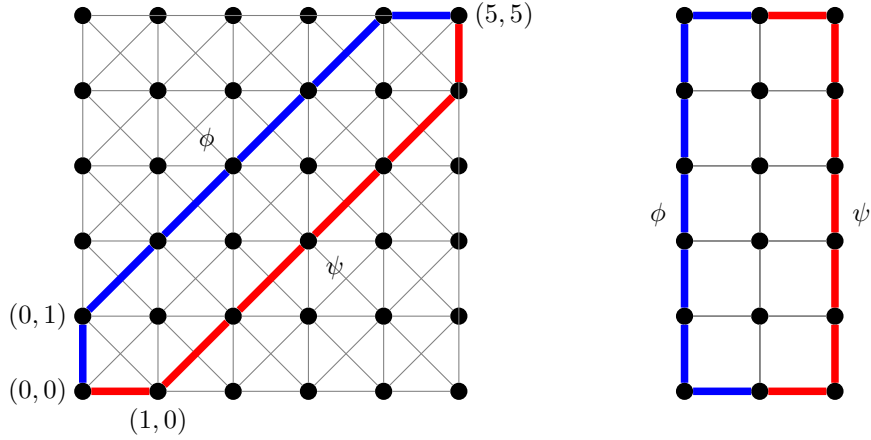

    \begin{example}
        Let $H$ be the product of two reflexive paths of length $n$. It has vertex set $\{0,1, \dots, n\}^2$, and $(i,j) \sim (i',j')$ if $|i' - i| \leq 1$ and $|j'-j| \leq 1$. (See Figure \ref{fig} for the case $n = 5$; all vertices are looped, but the loops are not shown.) $H$ is a \kNU[3] graph.  Where $G$ is an irreflexive path of length $n+1$, 
        let $\phi$ map it to the path
          \[ (0,0), (1,0), (2,1), \dots, (n,n-1), (n,n) \]
        and  $\psi$ map it to 
          \[ (0,0), (0,1), (1,2), \dots, (n-1, n), (n,n). \]
        
        Both are extensions of $p$ mapping the endpoints of $G$ to $(0,0)$ and $(n,n)$.
        One can check that $|\psi - \phi| = n-1$ and that $\odd(\phi,\psi) = n-1$.  The only vertices of $\phi(G)$ that can move are 
        $(1,0)$ and $(n,n-1)$, and they can only move to $(1,1)$ and $(n-1,n-1)$ respectively.  Without loss of generality, move $(1,0)$ to $(1,1)$.  The vertex $(2,1)$ now becomes free to move to $(2,2)$.  One can check that no vertex of $\phi(G)$ can move below the line 
        \[ (0,0), (1,1), \dots, (n-1,n-1), (n,n) \]
        until all but one of them have moved onto it, and then that last vertex $v$ can move directly from $\phi(v)$ to $\psi(v)$.  
        This takes $n-1$ moves, and it then takes $n-2$ more to move the rest of the vertices to $\psi(G)$. 
    
        The shortest recolouring  path from $\phi$ to $\psi$ has length $2n-3 = |\psi - \phi| + \odd(\phi,\psi) - 1$, showing that the first bound of Theorem \ref{thm:recoldistbound} is sharp.   
    \end{example}

    \begin{example}
        Let $H$ be the square product of loopless path of length $3$ and a loopless path of length $n$, shown for $n = 5$ on the right of  Figure \ref{fig}. Let $G$ be the loopless path of length $n+2$, and $p$ map its endpoints to the vertices $(1,0)$ and $(1,n)$. Let $\phi$ and $\psi$ be the the $p$-extensions show in the figure. There are only two possible shortest  recolourings from $\phi$ to $\psi$, and the both have length $|\psi - \phi| -1$.  This shows that the second bound of Theorem \ref{thm:recoldistbound} is sharp.   
 
    \end{example}

   \subsection{Efficient Dismantlings and the proof of Theorem \ref{thm:recoldistbound}} 

   In this subsection we prove Theorem  \ref{thm:recoldistbound}.  Observe that for a recolouring path to achieve the trivial lower bounds of Fact \ref{fact:lowerbound},
   at every step the vertex $v$ that moves would have to move along a shortest path from $\phi(v)$ to $\psi(v)$.  

    When $H$ has a \kNU[3] polymorphism, we almost get such a path by using what we call an efficient dismantling of $H^2$ to the diagonal.  We start with these. 
    A dismantling of a reflexive graph $H$ to a retract $r(H)$ is {\em efficient} if at every step, the dismantled vertex $v$ moves closer to $r(H)$.

    \begin{lemma}\label{lem:effdispoint}
     Any bipartite connected \kNU[3] graph has an efficient dismantling to any edge.   Any  connected \kNU[3] graph has  an efficient dismantling to any looped vertex. 
    \end{lemma}
    \begin{proof}
      The statement for bipartite graphs is immediate by induction using the following claim. 
    
    \begin{claim}\label{cl:effdispoint}
       Let $H$ be a connected bipartite graph with a \kNU[3] polymorphism, and let $e = vv'$ be a fixed edge of $H$.  Let $u$ be a vertex of $H$ with maximum distance from $e$, without loss of generality assume it is closer to $v$ than to $v'$.  There is a dismantling retraction of $u$ to some vertex $u'$ that is closer to $v'$ than $u$ is to $v$.    
    \end{claim}
    \begin{proof}[Proof of claim]\claimproof
       
        Let $d$ be the distance from $u$ to $v$ and let $U_d$ be the set of neighbours of $u$. By choice of $u$ and the fact that $H$ is bipartite, we get that every vertex of $U_d$ has a walk of length $d$ to $v'$.       
        
        If we can show that the vertices in $U_d$ have a common neighbour $u'$ at distance $d-1$ from $v'$, then $u$ can dismantle to this, and we are done. 
        
        A subset $U$ of $U_d$ is `good' if its vertices share a neighbour at distance $d-1$ from $v'$.  We show that all subsets $U$ of $U_d$ are good.  Towards contradiction, let $U$ be an inclusion minimal subset of $U_d$ that is not good.  It must contain at least two vertices, so $k = |U| + 1$ is at least $3$.

        Let $T$ be the tree with $k$ branches, the first having length $d-1$ and the others having length $1$. Let $p$ map the leaf of the long branch to $v'$ and biject the other leaves to $U$.  As $H$ is a \kNU[3] graph, it has no critical trees with $3$ or more leaves by Corollary \ref{cor:treeDuals}, so in particular, $(T,p)$ is not a critical tree. 
        
        As any subset of $U$ is good, $T \setminus \{e\}$ admits a $p$-extension to $H$ for any of the short branches $e$ of $T$.
        As the elements of $U$ have the common neighbour $u$, $T \setminus \{e\}$ admits a $p$-extension to $H$ for any edge $e$ in the long branch of $T$.  As $(T,p)$ is not critical, it must therefore admit a $p$-extending homomorphism $\phi$ to $H$. The target of the degree $k$ vertices of $T$ under this homomorphism is a neighbour of the vertices of $U$, and has distance $d-1$ to $v'$. So $U$ is good, which is our contradiction.  
    \end{proof}
    
     Having completed the proof for bipartite $H$, the proof for general graphs follows by Lemma \ref{lem:disred} by observing that if a dismantling step of $K_2 \times H$ is efficient, then the symmetric shadow dismantling step of $H$ is efficient. Indeed, the distance of a vertex $u$ from $v$ in $H$ is exactly the distance of a vertex $(i,u)$ from $(0,v)(1,v)$ in $K_2 \times H$.
     \end{proof}

    \begin{theorem}\label{thm:effdis}
      For any connected \kNU[3] graph $H$ there is an efficient dismantling of $C_\Delta(H^2)$ to $\Delta(H^2)$.  Such a dismantling can be found in polynomial time. 
    \end{theorem}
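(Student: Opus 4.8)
The plan is to run an efficient dismantling of $F := C_\Delta(H^2)$ directly toward the diagonal, by a ``distance-to-$\Delta$'' version of the argument behind Claim \ref{cl:effdispoint}. Note first that $F = H^2$ unless $H$ is bipartite, so writing everything in terms of $F$ handles the two cases of the statement uniformly. The first step is to record that $F$ admits a \kNU[3] polymorphism, equivalently (Corollary \ref{cor:treeDuals}) has no critical tree obstruction with three or more leaves: by Fact \ref{fact:NUvar} the square $H^2$ has one, and any critical tree obstruction of the component $C_\Delta$ is also one of $H^2$, since its tree is connected and therefore maps entirely into the single component determined by its leaves; hence $F$ has none either.

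The core step is to order the off-diagonal vertices of $F$ by decreasing distance to $\Delta$ and to prove the diagonal analogue of Claim \ref{cl:effdispoint}: a vertex $u=(a,b)$ at maximum distance $d\ge 1$ from $\Delta$ can be dismantled to a vertex $u'=(a',b')$ with $d_F(u',\Delta)\le d-1$. Writing $U := N_F(u) = N_H(a)\times N_H(b)$, domination $u\to u'$ requires exactly that $u'$ be a common neighbour of $U$, and efficiency requires $u'$ to be strictly closer to $\Delta$. Fixing a diagonal vertex $(z,z)$ realising $d_F(u,\Delta)=d$, I would, as in the claim, assume toward a contradiction an inclusion-minimal subset of $U$ having no common neighbour at distance $d-1$ from $(z,z)$, build the tree $T$ with one long branch of length $d-1$ sent to $(z,z)$ and the remaining short branches bijecting to that subset, and invoke the absence of critical trees with at least three leaves to produce the required common neighbour, contradicting minimality. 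Induction then yields an efficient dismantling of $F$ to $\Delta$, the diagonal vertices never being removed as they sit at distance $0$.

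For bipartite $H$ one may equally pass through the bipartite resolution: by Lemma \ref{lem:disred}, $F$ dismantles to $\Delta$ iff $K_2\times F$ dismantles to $K_2\times\Delta$, and since the distance of $(i,w)$ to $K_2\times\Delta$ in $K_2\times F$ equals the distance of $w$ to $\Delta$ in $F$ (as in the last paragraph of the proof of Lemma \ref{lem:effdispoint}), an efficient dismantling upstairs descends to an efficient symmetric shadow downstairs; this route also furnishes the \kNU[3] polymorphism on the genuinely bipartite graph $K_2\times F$ cleanly via Fact \ref{fact:NUvar}. For the polynomial-time claim there are at most $n_H^2-n_H$ steps; at each step the vertex of maximum distance to $\Delta$ is located by a breadth-first search in $F$, and its target $u'$ is found by solving the bounded list-homomorphism instance witnessing the common neighbour, which is solvable in polynomial time for $\NU$ graphs by \cite{FV99}. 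Hence the entire efficient dismantling is produced in time polynomial in $n_H$.

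I expect the main obstacle to be the core step, and specifically the distance bookkeeping: choosing the diagonal target $(z,z)$ correctly and verifying, from the maximality of $d$, that the neighbours in $U$ really have walks realising distance $d-1$ (rather than $d+1$) to $(z,z)$, so that the common neighbour produced by tree duality lies at distance exactly $d-1$ from $\Delta$ and the step is genuinely efficient. The parity of distances in the categorical square $H^2$ is what makes this delicate, and it is precisely here that restricting to the component $C_\Delta$ (on which the two coordinate distances to any diagonal vertex share parity) is what keeps the argument clean.
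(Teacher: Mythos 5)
Your setup (that $C_\Delta(H^2)$ inherits a \kNU[3] polymorphism, the reduction of the general case to the bipartite one via the symmetric shadow, and the polynomial-time bookkeeping) is fine, but the core step has a genuine gap, and it is exactly at the point you flag as delicate. Fixing a single diagonal vertex $(z,z)$ realising $d_F(u,\Delta)=d$ and aiming the long branch of the tree at it does not work, because the maximality of $d$ only controls the distance of a neighbour $w\in N_F(u)$ to the \emph{set} $\Delta$, not to the particular vertex $(z,z)$: the nearest diagonal vertex to $w$ may be some $(y,y)$ far from $(z,z)$, and $d_F(w,(z,z))$ can equal $d+1$. Worse, $C_\Delta(H^2)$ is bipartite when $H$ is, so $d_F(w,(z,z))\in\{d-1,d+1\}$ and in neither case does $w$ admit a walk of length $d$ to $(z,z)$; this is precisely the parity that Claim \ref{cl:effdispoint} dodges by targeting an \emph{edge} $vv'$ and aiming at the endpoint of the correct colour, an option unavailable for the connected two-coloured subgraph $\Delta$. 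The consequence is that singletons of $N_F(u)$ need not be ``good'' relative to $(z,z)$, so your inclusion-minimal non-good subset can have size one, the tree $T$ then has only two leaves, and Corollary \ref{cor:treeDuals} for $k=3$ says nothing about it (a \kNU[3] graph can perfectly well have critical path obstructions). The induction therefore does not get off the ground.

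The paper's proof avoids a direct tree-obstruction argument in $C_\Delta(H^2)$ altogether: for a vertex $(u_1,u_2)$ at maximum distance $d$ from $\Delta$ it takes a geodesic $P$ from $u_1$ to $u_2$ with midpoint $v$, passes to the ball $H'$ of radius $d$ about $v$ (a retract of $H$, hence still \kNU[3]), and applies the already-proved edge-target statement (Claim \ref{cl:effdispoint} / Lemma \ref{lem:effdispoint}) to $u_1$ with target the edge $u_2'u_2$ of $P$. Since neighbourhoods in the categorical square factor as $N(u_1)\times N(u_2)$, a domination of $u_1$ by $u'$ in $H'$ lifts to a domination of $(u_1,u_2)$ by $(u',u_2)$ in $(H')^2$, which is strictly closer to $\Delta$; composing with the retraction onto the current retract finishes the step. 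If you want to salvage your direct approach you would need to replace the single target $(z,z)$ by a device that constrains $\phi(c)$ to be close to all of $\Delta$ at once, which the tree-obstruction formalism (precoloured leaves mapped to single vertices) does not provide; the one-coordinate-at-a-time reduction is the substantive idea your proposal is missing.
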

     \begin{proof}
        Again we prove the statement for bipartite graphs; the general statement follows, as in Lemma \ref{lem:effdispoint}, by taking a symmetric shadow dismantling.  
         
        Let $H$ be a connected bipartite \kNU[3] graph. It is enough to show that for any retract $R = r(H)$ of $C_\Delta(H^2)$ containing $\Delta = \Delta(H^2)$ there is a vertex $u$ of $R$ that dismantles to some vertex closer to $\Delta$. 
        
        As any vertex $(u_1,u_2)$ of $R$ is in $C_\Delta(H^2)$ the vertices $u_1$ and $u_2$ are distance $2d$ apart  for some $d$, and the closest vertices to them in $\Delta$ are vertices $(v,v)$ where $v$ is the middle vertex of a path $P$ of length $2d$ from $u_1$ to $u_2$.  In particular $d(v,u_1) = d = d(v,u_2)$.   
        
        Let $(u_1,u_2)$ be a vertex of $R$ that maximises this distance $d$, and let $P$ and $v$ be as above. The subgraph $H'$ of $H$ of consisting of vertices of distance at most $d$ from $v$ is a retract of $H$ (this follows by Claim \ref{cl:effdispoint} as one can retract any vertex at distance more than $d$ from $v$ by choosing an appropriate edge containing $v$) so is \kNU[3] by Fact \ref{fact:NUvar}. $R$ is a retract of $(H')^2$, so is also \kNU[3]. 
        
        Where $u'_2$ is the neighbor of $u_2$ on the path $P$, $v_1$ is a vertex in $R$ having maximum distance from the edge $u'_2u_2$, and so by Lemma  
        \ref{lem:effdispoint} dismantles to a vertex $u'$ closer to $u'_2u_2$.
        
        It follows that $(u_1,u_2)$ dismantles to $(u',u_2)$ in $(H')^2$ and so 
        to $r((u',u_2))$ in $R$.  As $u'$ is closer to the edge $u'_2u_2$ than $u_1$ is in $H'$, $(u',u_2)$ is closer to the diagonal than $(u_1,u_2)$ is in $R$, 
        as needed. 
        
        To find an efficient dismantling, Claim \ref{cl:effdispoint} tells us we can simply order the vertices by their distance from the diagonal, and then processing the vertices farthest from the diagonal first, find a dominating neighbour closer to the diagonal and retract to it.  This can be done in time $O(n_H^3)$.

        \end{proof}
        
     We wonder if Theorem \ref{thm:effdis} might not hold for all $\NU$ graphs.  We use it now to prove Theorem \ref{thm:recoldistbound}, the two statements of which we restate as Corollaries \ref{cor:distboundodd} and \ref{cor:distbound}.

    \begin{corollary}\label{cor:distbound}
     For bipartite \kNU[3] graph $H$ and elements $\phi$ and $\psi$ of $\bCol(G,H;p)$ we have
      $d_{\Col}(\phi,\psi) \leq |\psi - \phi| - 1$.  
    \end{corollary}
    \begin{proof}
       
       As $H$ is \kNU[3], Theorem \ref{thm:effdis} gives us an  efficient dismantling $D: r_0, r_1, \dots, r_d$ of $C_\Delta(H^2)$ to $\Delta$.
       Using this dismantling $D$ in the proof of Lemma \ref{lem:Dis1} we get the path 
                \[P: a_0, a_1, \dots, a_d,b_{d-1}, \dots, b_1, b_0\] 
        from $\phi$ and $\psi$, given as \eqref{eq:abwalk} in that proof. 

       As $H$ is bipartite and there is a path from $\phi$ to $\psi$ we have that for any vertex $v$ of $G$ the distance $d_H(\phi(v),\psi(v))$ is even.  
      \begin{claim} For a vertex $v$ of $G$ if $d_H(\phi(v),\psi(v)) = 2\ell$ the vertex $v$ moves at most $2\ell$ times under $P$. \end{claim}
      \begin{proof}\claimproof
            The proof is an induction on $\ell$. The claim is obvious if $\ell = 0$ as $\Delta$ is fixed by $D$, so assume that $\ell \geq 1$.  Where $m$ is the middle vertex of a shortest path  from $\phi(v)$ to $\psi(v)$, the vertex $(\phi(v),\psi(v))$ has distance $\ell$ from $(m,m)$ in $C_\Delta(H^2)$, so has distance $\ell$ to $\Delta$.   Let $i$ be such that $a_ia_{i+1}$ is the first edge of the path that moves $v$.  This means that $r_ir_{i+1}$ moves $(\phi(v),\psi(v))$; because this is an efficient dismantling, it moves it closer to $\Delta$.  Thus $a_{i+1}(v) = \pi_1 \circ r_i (\phi(v),\psi(v))$ and $b_{i+1}(v) = \pi_2 \circ r_i (\phi(v),\psi(v))$ are distance at most $2\ell - 2$ apart.  The claim follows by induction. 
       \end{proof} 

      As the claim is for all $v$, resolving $P$ into a recolouring path gives a path of length at most  $|\psi - \phi|$.
       To get the `$-1$' in the statement of the corollary, observe that by Fact \ref{fact:dismantreorder} and Lemma \ref{lem:effdispoint} we can choose the dismantling $D$ to the diagonal so that the vertex dismantled by the last step $r_d$ is any vertex of $C_\Delta(H^2)$ that has distance $1$ from the diagonal.  Let this be $(a, b)$ where, for some vertex $v$ of $G$, $m$ is the middle vertex of a shortest path between $\phi(v)$ and $\psi(v)$, and $a$ and $b$ are the neighbours of $m$ on this path that are closer to $\phi(v)$ and $\psi(v)$ respectively. So $v$ is moved by $a_{d-1}a_d$ and $a_db_{d-1}$.  

We can reduce the number of times that $v$ is moved by using path  \eqref{eq:abwalkB} of Lemma \ref{lem:Dis1} in the above proof rather than \eqref{eq:abwalk}; so we get that the recolouring path has length at most  $|\psi - \phi| -1$. 
    \end{proof}

    For a non-bipartite \kNU[3] graph $H$ we can apply this to the bipartite resolution $B(H)$ and the symmetric shadow dismantling 
    gives a path between $\phi$ and $\psi$.  However, if $d(\phi(v),\psi(v))$ is odd, then the distance of $(\phi(v),\psi(v))$ to the diagonal in $H^2$, and so in $C_\Delta(B(H))$, is $1/2\left( d(\phi(v),\psi(v)) + 1 \right)$.   Thus the path from $\phi$ to $\psi$ moves $v$ through $d(\phi(v),\psi(v)) + 1$ steps.  This gives the following.
    
    \begin{corollary}\label{cor:distboundodd}
       For \kNU[3] graph $H$ and elements $\phi$ and $\psi$ in $\bHom_B(G,H;p)$, we have  $d_{\Col}(\phi,\psi) \leq |\psi - \phi| + \odd(\phi,\psi) - 1$.
    \end{corollary}

   \section{Shortest Path Reconfiguration}\label{sect:SPR}
   
    The shortest path reconfiguration graph $\SP(H,u,v)$ is the graph on the set of shortest $uv$-paths in $H$ where two are adjacent if they differ on a single vertex.  The shortest path reconfiguration problem $\RSP$ asks for an instance $(H,u,v,P_\phi,P_\psi)$ if there is a path between $P_\phi$ and $P_\psi$ in $\SP(H,u,v)$.

We observed in the introduction that $\SP(H,u,v) = \Col(P_d,H;p)$ where $d$ is the distance between $u$ and $v$, so $H$ is $\SP$-trivial if we get it from an $\Ext$-trivial graph by removing loops,  
thus Theorem \ref{thm:mainfull} this gives Corollary \ref{cor:SPR}: any $\NU$ graph $H$, with loops removed, is $\SP$-trivial.

   This is far from a characterisation of $\SP$-trivial graphs, as odd cycles are $\SP$-trivial while for an odd cycle of length at least $5$, no addition of loops can make it 
   into an $\NU$ graph.  A full characterisation of the $\SP$-trivial graphs  would be interesting. It would have to deal with the following example which produces many $\SP$-trivial graphs that are not $\NU$ graphs for whatever loops are added.   
   
   \begin{example}
   If $H$ is bipartite and $\SP$-trivial, and we add one edge $e$ making an odd cycle, then it remains $\SP$-trivial.  For any two vertices $u$ and $v$, all shortest paths either use $e$ or all shortest paths do not use $e$.  If they do not, then they reconfigure, if they do, then we can reconfigure the paths from $u$ to $e$ and then reconfigure the paths from $e$ to $v$. 
   \end{example}
    
    We compare Corollary \ref{cor:SPR} with related results from \cite{Bonsma13} and \cite{GKL}.
    
    \begin{itemize}
      \item In \cite{Bonsma13} it was shown that chordal graphs are $\SP$-trivial. Any chordal graph, with loops added to every vertex, is a \kNU{} graph. So we recover this result. 
    
      \item In \cite{GKL} it was shown that a hypercube is $\SP$-trivial.
      (It was shown that $\RSP$ can be solved in polynomial time, but the algorithm shows the graph is $\SP$-trivial.)  A hypercube is a \kNU[3] graph, so we recover this result.

      \item In \cite{GKL} it was shown, more generally,  that any graph with a so-called triangle property is $\SP$-trivial.  The triangle property is  equivalent to the omission of certain critical tree obstructions with three leaves.  This class of graphs is incomparable with the class of $\NU$ graphs, but contains the class of \kNU[3] graphs, so their proof also shows that \kNU[3] graphs are $\SP$-trivial. 
    \end{itemize}
    
    All results mentioned from \cite{Bonsma13} and \cite{GKL} come with algorithms, polynomial in $|V(H)|$, for finding the shortest path in $\SP(H,u,v)$ between two vertices.  Though we find the shortest path between them in $\Hom(P_d,H;p)$ in polynomial time if $H$ is $\NU$ this does not find a shortest path between them in $\Col(P_d,H;p)$.  Doing this would be interesting.

    \subsection{Diameter of the Shortest Path Reconfiguration graph}

    A {\em $d$-instance} of $\RSP$ is an instance $(H,u,v,\phi,\psi)$ for which $ d_H(u,v) = d$. It was shown in 
   \cite{Bonsma13} that  the $\SP$-reconfiguration graph $\SP(H,u,v)$ has diameter $d-1$ for chordal graphs $H$ and $d$-instances.  In \cite{GKL} the diameter was shown to be at most $\binom{d-1}{2}$ for $d$ instances of hypercubes $H$. 

  From Proposition \ref{prop:discrel1} we get the following. 
    
    \begin{corollary}\label{cor:dinst}
     For \kNU{} graphs $H$, the $\SP$-reconfiguration graph $\SP(H,u,v)$ has diameter at most $(k-2)\binom{d+1}{2}$ for any $d$-instance $(H,u,v,\phi,\psi)$ of $\RSP$.  
    \end{corollary}
    \begin{proof}
      Recall that if there is a path of length $\ell$ from $\phi$ to $\psi$ in $\bHom(P_d,H)$ then  Proposition \ref{prop:discrel1} gives a path of length $(k-2)\ell$ between them in $\bHom(P_d,H;p)$.  The proof was simple, is it easily seen to work to show that if there is a path of length $\ell$ in $\bCol(P_d,H)$, then there is one of length $(k-2)\ell$ in $\bCol(P_d,H;p) = \SP(H,u,v)$.  So it is enough to show that there is a  path of length $\binom{d+1}{2}$ between  $\phi(P_d)$ and  $\psi(P_d)$ in $\bCol(P_d,H)$. 
  
    It is a simple exercise to show that one can recolour a irreflexive path $\phi(P_d)$ on $d+1$ to vertices to its initial edge by a recolouring sequence of length  $\floor{d/2}\ceil{d/2}$.  
    From here, we can move the $\ceil{d/2}$ vertices at the second vertex of $\phi(P_d)$ to the second vertex of $\psi(P_d)$ in $\ceil{d/2}$ recolouring steps, and then recolour  back to $\psi(P_d)$ in another $\floor{d/2}\ceil{d/2}$ steps. This gives a  path between $\phi$ and $\psi$ in $\bCol(P_d,H)$ of length 
    \[ 2 \floor{d/2}\ceil{d/2} +  \ceil{d/2} = \textstyle \binom{d+1}{2}, \] 
   as needed. 
    \end{proof}
    
    In the case that $H$ is a \kNU[3] graph, we improve this using Corollary \ref{cor:distboundodd}.
    
    \begin{corollary}\label{cor:dinst2}
     For any \kNU[3] graph $H$ and any $d$-instance $(H,u,v,\phi,\psi)$ of $\RSP$ there path between $\phi$ and $\psi$ in $\SP(H,u,v)$ of  length at most $d^2/2 -1$.
    \end{corollary}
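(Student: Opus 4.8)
The plan is to feed the instance $(P_d,p,\phi,\psi)$ of $\RE(H)$ into the distance-based reconfiguration bounds already established, namely Corollary \ref{cor:distbound} for bipartite $H$ and Corollary \ref{cor:distboundodd} for non-bipartite $H$, and then to estimate the resulting quantity using only the fact that $\phi$ and $\psi$ are shortest $uv$-paths. Write the vertices of $P_d$ as $0,1,\dots,d$ with $p(0)=u$ and $p(d)=v$. Since $\phi$ and $\psi$ are homomorphisms realising paths of length exactly $d=d_H(u,v)$, each image $\phi(i)$ and $\psi(i)$ lies on a shortest $uv$-path, so $d_H(u,\phi(i))=d_H(u,\psi(i))=i$ and $d_H(\phi(i),v)=d_H(\psi(i),v)=d-i$. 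This is the only structural input I need from the shortest-path setting.

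The key estimate is then that, for each vertex $i$, the triangle inequality through $u$ and through $v$ gives $d_H(\phi(i),\psi(i))\le 2i$ and $d_H(\phi(i),\psi(i))\le 2(d-i)$, hence $d_H(\phi(i),\psi(i))\le 2\min(i,d-i)$. In the non-bipartite case I use Corollary \ref{cor:distboundodd}, whose bound charges vertex $i$ a cost of $d_H(\phi(i),\psi(i))$, plus one more when this distance is odd; that is, it charges $e_i$, the least even integer that is at least $d_H(\phi(i),\psi(i))$. The crucial observation is that $2\min(i,d-i)$ is itself an even number that is at least $d_H(\phi(i),\psi(i))$, so $e_i\le 2\min(i,d-i)$; rounding the distance up to the nearest even integer does not break the per-vertex bound. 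In the bipartite case Corollary \ref{cor:distbound} applies directly with $O(\phi,\psi)=0$ (both images lie in the same part of $H$, so each distance is already even), and the same inequality $e_i\le 2\min(i,d-i)$ holds trivially.

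It then remains to sum: the total reconfiguration length is at most $\sum_{i=0}^{d}e_i-1\le \sum_{i=0}^{d}2\min(i,d-i)-1$. A routine computation of $\sum_{i=0}^{d}\min(i,d-i)$ gives $(d/2)^2$ when $d$ is even and $\tfrac{1}{4}(d^2-1)$ when $d$ is odd, so $\sum_{i=0}^{d}2\min(i,d-i)\le d^2/2$ in both cases, yielding the claimed bound $d^2/2-1$. I expect the only real subtlety to be the bookkeeping with the odd-distance correction term $O(\phi,\psi)$: one must verify that these $+1$ corrections are absorbed into the slack between $d_H(\phi(i),\psi(i))$ and the even upper bound $2\min(i,d-i)$, rather than being added on top of an already tight sum. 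Once the observation $e_i\le 2\min(i,d-i)$ is in place, no further case analysis is needed and the statement follows immediately from the two corollaries.
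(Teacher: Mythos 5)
Your proposal is correct and follows essentially the same route as the paper: bound $d_H(\phi(v_i),\psi(v_i))\le 2\min(i,d-i)$ using that both images are at distance $i$ from $u$ and $d-i$ from $v$, sum to get $|\psi-\phi|\le 2\floor{d/2}\ceil{d/2}\le d^2/2$, and invoke Corollary \ref{cor:distboundodd}. Your handling of the odd-distance correction is in fact slightly more careful than the paper's, which simply asserts all the distances are even; your observation that each per-vertex cost, rounded up to the nearest even integer, is still at most the even quantity $2\min(i,d-i)$ absorbs the $O(\phi,\psi)$ term cleanly either way.
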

    \begin{proof}
       In a path $P_d = v_0,v_1, \dots, v_d$ of length $d$, the distance between $\phi(v_i)$ and $\psi(v_i)$  is at most $2\cdot \min(i,d-i)$.
       Computing 
        \[ |\psi - \phi| \leq 2 \floor{d/2}\ceil{d/2} \leq d^2/2\]
       the result follows by Corollary \ref{cor:distboundodd} by observing that all of these distances are even. 
    \end{proof}
    
    We note that our bound of $d^2/2$, loses slightly in the irreflexive case to the tight bound of $\binom{d-1}{2}$ given (for hypercubes) in \cite{GKL}. But it applies to a larger class of graphs.

\providecommand{\bysame}{\leavevmode\hboxto3em{\hrulefill}\thinspace}
\newcommand{\doi}[1]{\href{http://dx.doi.org/#1}{\small\nolinkurl{DOI: #1}}}
\renewcommand{\url}[1]{\href{https://arxiv.org/abs/#1}{\small\nolinkurl{arXiv: #1}}}
\bibliographystyle{abbrvnat} 

\end{document}